\newtheorem{theorem}{Theorem}[section]
\newtheorem{prop}[theorem]{Proposition}
\newtheorem{thm}[theorem]{Theorem}
\newtheorem{lemma}[theorem]{Lemma}
\theoremstyle{definition} 
\newtheorem{defn}[theorem]{Definition}
\newtheorem{example}[theorem]{Example}
\newtheorem{remark}[theorem]{Remark}
\newcommand{\C}{\mathbb{C}}
\newcommand{\dd}{\mathbf{d}}
\newcommand{\cO}{\mathcal{O}}
\newcommand{\bM}{\overline{M}}
\newcommand{\bb}{\mathbb}
\newcommand{\Z}{\bb{Z}}
\newcommand{\bbP}{\bb{P}}
\newcommand{\fX}{\mathfrak{X}}
\newcommand{\HH}{\mathfrak{H}}
\newcommand{\qu}{/\kern-.7ex/}
\newcommand{\lqu}{\backslash \kern-.7ex \backslash}
\newcommand{\on}{\operatorname}
\newcommand{\Hom}{\on{Hom}}
\newcommand{\ev}{\on{ev}}
\newcommand{\NE}{\on{NE}}
\newcommand{\bt}{\mathbf{t}}
\title[Mirror theorems for root stacks and pair]{Mirror theorems for root stacks and relative pairs}
\author{Honglu Fan}
\address{Department of mathematics\\ ETH Z\"urich\\ R\"amistrasse 101\\8092 Z\"urich\\Switzerland}
\email{honglu.fan@math.ethz.ch}
\author{Hsian-Hua Tseng}
\address{Department of Mathematics\\ Ohio State University\\ 100 Math Tower, 231 West 18th Ave.\\Columbus\\ OH 43210\\ USA}
\email{hhtseng@math.ohio-state.edu}
\author{Fenglong You}
\address{Department of Mathematical and Statistical Sciences\\ 632 CAB \\ University of Alberta\\Edmonton\\ AB\\ T6G 2G1\\ Canada}
\email{fenglong@ualberta.ca}
\keywords{Gromov--Witten invariants, mirror symmetry, root stacks, relative pairs}
\subjclass[2010]{14N35 (primary), 14A20, 14J33, 53D45(secondary)}
\begin{document}
\date{\today}

\begin{abstract} 
Given a smooth projective variety $X$ with a smooth nef divisor $D$ and a positive integer $r$, we construct an $I$-function, an explicit slice of Givental's Lagrangian cone, for Gromov--Witten theory of the root stack $X_{D,r}$. As an application, we also obtain an $I$-function for relative Gromov--Witten theory following the relation between relative and orbifold Gromov--Witten invariants.
\end{abstract}

\maketitle 

\setcounter{tocdepth}{1}
\tableofcontents

\section{Introduction}

\subsection{Overview}
Mirror symmetry has been an important way of computing Gromov--Witten invariants.
A mirror theorem is usually stated as a formula relating certain generating function (the $J$-function) of genus-zero Gromov--Witten invariants of a variety and the period integral (or the $I$-function) of its mirror. Mirror theorem for quintic threefolds was first proven by A. Givental \cite{Givental96a} and Lian--Liu--Yau \cite{LLY}. Since then, genus-zero mirror theorems have been proven for many types of varieties: toric complete intersections \cite{Givental96b}, \cite{Iritani08}, \cite{Iritani17}, partial flag varieties \cite{BCFKvS}, toric fibrations \cite{Brown}, etc. 

On the other hand, Gromov--Witten theory of Deligne--Mumford stacks was introduced in \cite{CR}, \cite{AGV02}, \cite{AGV}. Since Deligne--Mumford stacks are natural ingredients in mirror symmetry, there have been a lot of works on their mirror theorems as well. These include, for example, \cite{JK}, \cite{CCIT}, \cite{CCFK}, \cite{JTY}, and so on.

Our motivation is to study Gromov--Witten theory of Deligne--Mumford stacks. 
The geometric structure of a smooth Deligne--Mumford stack $\mathcal X$ in relation with the coarse moduli space $X$, which we assume to be a scheme, can be factored into three stages. 
Let
\[ 
\mathcal X\rightarrow X
\] 
be the structure map of an orbifold $\mathcal X$ to its coarse moduli $X$. Following the discussion of \cite{Tseng17}, it can be factored into
\[
\mathcal X\rightarrow \mathcal X_{rig}\rightarrow \mathcal X_{can}\rightarrow X.
\]
The first map is given by rigidification \cite{ACV} that allows one to "remove" the generic stablilizer of $\mathcal X$. 
Under the map $\mathcal X\rightarrow \mathcal X_{rig}$, $\mathcal X$ is a gerbe over $\mathcal X_{rig}$.
Following \cite{GM}, under some mild hypothesis, the second map $\mathcal X_{rig}\rightarrow \mathcal X_{can}$ is a composition of the root construction of \cite{AGV} and \cite{Cadman}. 
In the third map, $\mathcal X_{can}$ is the canonical stack associated to $X$ in the sense of \cite[Section 4.1]{FMN}. The stacky locus of $\mathcal X_{can}$ is of codimension at least $2$. 

To study Gromov--Witten theory of a Deligne--Mumford stack, one may break it down according to this decomposition. Gromov--Witten theory of gerbes has been studied by X. Tang and the second author in \cite{TT14}, \cite{TT16}. As to the second map, it is shown in \cite{GM} that the root construction is essentially the only way to introduce stack structures in codimension $1$. Gromov--Witten invariants of root stacks are partially studied by the second and the third author in \cite{TY16}. We would like to obtain more precise results in this paper.

\subsection{Orbifold Gromov--Witten theory of root stacks}
In this paper, we study genus-zero mirror symmetry for root stacks over smooth projective varieties. 

More precisely, let $X$ be a smooth projective variety and $D$ be a smooth nef divisor. In this paper, 
we construct the $I$-function for the $r$-th root stack $X_{D,r}$ in terms of the $J$-function of its coarse moduli space $X$. The idea of finding the $I$-function is to
construct root stacks as hypersurfaces in toric stack bundles. The hypersurface contruction is given in Section \ref{sec:hyp}. Combining with orbifold quantum Lefschetz theorem (\cite{Tseng} or \cite{CCIT09} plus a similar argument of \cite{KKP}), the $I$-function\footnote{Following the custom in mirror theorems, the term ``$I$-function'' refers to an explicitly constructed slice of Givental's Lagrangian cone.} of root stacks can be constructed as a hypergeometric modification of the $I$-function of the toric stack bundle, which is further written (according to \cite{JTY}) in terms of the $J$-function of $X$. We have the following theorem.

\begin{theorem}[=Theorem \ref{main-theorem}]
The $I$-function for the root stack $X_{D,r}$:
\begin{align}
I_{X_{D,r}}(Q,t,z)=&\sum_{\dd\in \on{NE}(X)}
J_{X, \dd}(t,z)Q^{\dd}\left(\frac{\prod_{0<a\leq D\cdot \dd}(D+az)}{\prod_{\langle a\rangle=\langle D_r\cdot \dd\rangle,0<a\leq D_r\cdot \dd}(D_r+az)}\right)\mathbf 1_{\langle -D_r\cdot \dd\rangle}
\end{align}
lies in the Givental's Lagrangian cone $\mathcal L_{X_{D,r}}$ for the root stack $X_{D,r}$, where $\langle a\rangle$ is the fractional part of the rational number $a$.
\end{theorem}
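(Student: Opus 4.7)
The plan is to realize the root stack $X_{D,r}$ as a smooth hypersurface in a suitable toric stack bundle $\mathcal{Y}\to X$, and then chain together two mirror theorems: the Jiang--Tseng--You mirror theorem \cite{JTY} for toric stack bundles, and the orbifold quantum Lefschetz principle of \cite{Tseng} (and of \cite{CCIT09}, combined with the argument of \cite{KKP}). The hypersurface construction itself is carried out in Section~\ref{sec:hyp}, and the proof assembles these three ingredients in that order.

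First, following Section~\ref{sec:hyp}, I would construct the ambient toric stack bundle. Starting from the line bundle $\mathcal{O}_X(D)$, the natural candidate is a stacky projective line bundle: take $\mathbb{P}(\mathcal{O}_X\oplus\mathcal{O}_X(D))$ and apply the Cadman root construction \cite{Cadman} of order $r$ along a section, producing a fiber-toric Deligne--Mumford stack $\mathcal{Y}\to X$ whose generic fiber is a stacky $\mathbb{P}^1$ with one stacky point of order $r$. One then identifies $X_{D,r}$ with the zero locus of a section of a natural orbifold line bundle $\mathcal{L}$ on $\mathcal{Y}$ whose first Chern class pulls back to $D$ on the coarse part.

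Second, I would compute the $I$-function of $\mathcal{Y}$ by applying the Jiang--Tseng--You mirror theorem. Because the fiber is one-dimensional with a single stacky point of order $r$, this produces a sum indexed by curve classes $\dd\in\on{NE}(X)$ together with fiber classes, in which the $J$-function $J_{X,\dd}(t,z)$ is modified by a hypergeometric factor in $D_r$ and an explicit twisted-sector insertion $\mathbf{1}_{\langle -D_r\cdot\dd\rangle}$; this accounts for the denominator and the twisted-sector label of the target formula. Third, I would apply orbifold quantum Lefschetz to pass from $\mathcal{Y}$ to the hypersurface $X_{D,r}$: since the section cutting out $X_{D,r}\subset\mathcal{Y}$ is a transverse section of a convex line bundle whose restriction to the untwisted sector has first Chern class $D$ (nef by hypothesis), the Lefschetz insertion multiplies the ambient $I$-function by $\prod_{0<a\le D\cdot\dd}(D+az)$ in the numerator. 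Combining the two modifications yields precisely the formula stated in the theorem.

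The main obstacle I anticipate is the first step: one must verify carefully, not merely up to birational equivalence, that the proposed hypersurface in $\mathcal{Y}$ is isomorphic to $X_{D,r}$ as a Deligne--Mumford stack, and that the orbifold line bundle $\mathcal{L}$ carries the correct age data at each twisted sector, so that the Lefschetz insertion reproduces the exact orbifold product $\prod_{\langle a\rangle=\langle D_r\cdot\dd\rangle,\, 0<a\le D_r\cdot\dd}(D_r+az)$ rather than a combinatorially close but distinct hypergeometric factor. A secondary subtlety is to ensure that the convexity hypotheses required by both mirror theorems are compatible with the orbifold structure of $\mathcal{Y}$, so that the two results can be chained without modification. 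Once the geometric identification of Section~\ref{sec:hyp} is in place and the orbifold line bundle data is tracked through the twisted sectors, the two mirror theorems should combine essentially formally to produce the claimed $I$-function.
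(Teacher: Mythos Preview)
Your overall strategy matches the paper's, but the accounting in steps two and three is off in a way that hides a genuine step. The ambient $\mathcal{Y}=Y_{X_\infty,r}$ is a $\mathbb{P}^1_{1,r}$-bundle with \emph{two} toric divisors, so the JTY $I$-function is a sum over $(\dd,n)\in\on{NE}(X)\times\mathbb{Z}_{\ge 0}$ carrying \emph{two} hypergeometric factors, one from each divisor of the fiber---not only the denominator you describe. Likewise, the quantum Lefschetz hypergeometric factor is governed by $c_1(p^*\cO_Y(1))=h$ and the ambient curve class $(\dd,n)$; it is $\prod_{0<a\le n}(h+az)$, not $\prod_{0<a\le D\cdot\dd}(D+az)$. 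After pulling back via $i^*h=D$ the resulting $\tilde I_{X_{D,r}}(Q,q,t,z)$ still depends on the extra fiber Novikov variable $q$.

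The step you are missing is the restriction of Novikov variables to $\on{NE}(X_{D,r})\subset\on{NE}(Y_{X_\infty,r})$. The paper handles this via Lemma~\ref{lem:curve_class}: any curve class pushed forward from $X_{D,r}$ satisfies $n=D\cdot\dd$, so setting $n=D\cdot\dd$ and absorbing $q^{D\cdot\dd}$ into $Q^\dd$ is the correct restriction. One then checks that the second JTY factor (from the non-stacky toric divisor $h-D$) becomes $1/((n-D\cdot\dd)!\,z^{n-D\cdot\dd})$, which equals $1$ on this locus and forces $n\ge D\cdot\dd$ elsewhere; since $(\dd,D\cdot\dd)$ sits on the boundary of the relevant sub-cone, Birkhoff factorization for $\tilde I$ and for the restricted $I$ agree on these coefficients. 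Only after this restriction do the three hypergeometric pieces collapse to the displayed formula. Your anticipated obstacles (stack-level identification of the hypersurface, age data, convexity of $p^*\cO_Y(1)$) are all handled in the paper, but the Novikov-variable restriction is the substantive point your outline omits.
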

In the formula, $D_r\subset X_{D,r}$ is the divisor corresponding to the substack isomorphic to an $r$-th root gerbe of $D$. In computations, one could simply replace $D_r$ by $D/r$.

Using the $S$-extended $I$-function for toric stack bundles in \cite{JTY}, we also state the mirror theorem for root stacks in terms of $S$-extended $I$-function.
\begin{theorem}[=Theorem \ref{thm-orb-extended}]
The $S$-extended $I$-function
\begin{align*}
&I_{X_{D,r}}^{S}(Q,x,t,z)\\
=&\sum_{\dd\in \on{NE}(X)}\sum_{(k_1,\ldots,k_m)\in (\mathbb Z_{\geq 0})^m}J_{X,\dd}(t,z)Q^{\dd}\frac{\prod_{i=1}^m x_i^{k_i}}{z^{\sum_{i=1}^m k_i}\prod_{i=1}^m(k_i!)}\times\\
&\left(\prod_{0<a\leq D\cdot \dd}(D+az)\right)
\notag  \left(\frac{\prod_{\langle a\rangle=\langle D_r\cdot \dd-\frac{\sum_{i=1}^mk_ia_i}{r}\rangle,a\leq 0}(D_r+az)}{\prod_{\langle a\rangle=\langle D_r\cdot \dd-\frac{\sum_{i=1}^mk_ia_i}{r}\rangle,a\leq D_r\cdot \dd-\frac{\sum_{i=1}^mk_ia_i}{r}}(D_r+az)}\right)\mathbf 1_{\langle -D_r\cdot \dd+\frac{\sum_{i=1}^mk_ia_i}{r}\rangle}
\end{align*}
lies in the Givental's Lagrangian cone $\mathcal L_{X_{D,r}}$ for the root stack $X_{D,r}$.
\end{theorem}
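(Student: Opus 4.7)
The plan is to imitate the argument for Theorem \ref{main-theorem}, replacing the (non-extended) $I$-function of the ambient toric stack bundle with the $S$-extended version from \cite{JTY}. I will realize $X_{D,r}$ as the zero locus of a regular section of a line bundle on a toric stack bundle $Y\to X$ via the hypersurface construction of Section \ref{sec:hyp}, take the $S$-extended $I$-function of $Y$, and then apply orbifold quantum Lefschetz from \cite{Tseng} (or \cite{CCIT09} together with the argument of \cite{KKP}) to the line bundle cutting out $X_{D,r}$.

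First I would recall the $S$-extended $I$-function of $Y$ from \cite{JTY}: it is obtained by a Brown-type twisting of the $J$-function of $X$ by the toric fiber data, and the extended set $S=\{a_1,\dots,a_m\}$ contributes exactly the factor $\prod_i x_i^{k_i}/\bigl(z^{\sum_i k_i}\prod_i k_i!\bigr)$ together with a shift of age by $\sum_i k_i a_i/r$ along the stacky direction of the fiber. Next I would apply orbifold quantum Lefschetz with respect to the line bundle on $Y$ whose zero section is $X_{D,r}$. For each age class, quantum Lefschetz multiplies the corresponding component by a hypergeometric factor of the form $\prod_{\langle a\rangle=\langle c\rangle,\, a\leq 0}(L+az)\big/\prod_{\langle a\rangle=\langle c\rangle,\, a\leq c}(L+az)$, with $c$ depending on the degree and the age. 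The pullback of $D$ from $X$ contributes the clean factor $\prod_{0<a\leq D\cdot\dd}(D+az)$, while the $D_r$-direction contributes a hypergeometric ratio whose endpoint is shifted from $D_r\cdot\dd$ to $D_r\cdot\dd-\sum_i k_i a_i/r$ precisely because of the age shift introduced by the $S$-extension.

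The main obstacle will be the age bookkeeping. Extended sectors shift the twisted component at which sections are evaluated, and quantum Lefschetz interacts with this shift through the fractional part $\langle c\rangle$ appearing in its hypergeometric factor, so I would need to verify that the age at which the modifying factor is applied is precisely $\langle D_r\cdot\dd-\sum_i k_i a_i/r\rangle$. This ensures that the numerator and denominator ranges match the stated formula and that the output lands in the component indexed by $\mathbf{1}_{\langle -D_r\cdot\dd+\sum_i k_i a_i/r\rangle}$. Once this matching is pinned down, the invariance of Givental's Lagrangian cone under the quantum Lefschetz transformation concludes the argument.
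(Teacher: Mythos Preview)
Your proposal is correct and follows essentially the same approach as the paper, whose proof is a single sentence citing the $S$-extended mirror theorem for toric stack bundles \cite{JTY} together with orbifold quantum Lefschetz \cite{Tseng}, \cite{CCIT09}. The age-bookkeeping you describe fills in details the paper leaves implicit, and the same auxiliary ingredients---convexity of $p^*\cO_Y(1)$ from Lemma \ref{lem:convex} and the restriction of Novikov variables to $\on{NE}(X_{D,r})$ via Lemma \ref{lem:curve_class}---carry over unchanged from the non-extended case.
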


We refer the readers to Section \ref{sec:hyper-cons} for the precise definitions of the notation.
Note that if $X$ is a toric variety, and $D$ is \emph{not} an invariant divisor, we can still write down the $I$-function of the corresponding root stack in terms of combinatorial data of $X$. Hence, our mirror formula provides a way to compute Gromov--Witten invariants of some non-toric stacks.

From a pure Gromov--Witten theory point of view, $I$-functions need to be accompanied with other reconstruction theorems in order to (sometimes partially) recover genus-zero Gromov--Witten theory. In Section \ref{subsec:refine}, a more general result describing the full genus-zero theory is obtained by further studying the localization on the hypersurface construction.

\begin{thm}[=Theorem \ref{thm:tw}]
Let $Y_{X_\infty,r}$ be the $r$-th root stack of $Y:=\bbP(\cO_X(-D)\oplus\cO_X)$ along the infinity section $X_\infty$. Given cohomological classes $\gamma_j\in H_{\on{CR}}^*(Y_{X_\infty,r})$ and nonnegative integers $a_j$, for $1\leq j\leq n$, we have
\[
\langle \prod_{j=1}^n \tau_{a_j}(i^*\gamma_j) \rangle^{X_{D,r}}_{0,n,\dd} = \left[
\displaystyle\int_{\bM_{0,n}(\fX_r,\dd)}
\prod\limits_{j=1}^n \bar{\psi}_j\on{ev}_j^* (i^{\prime*}\gamma_j) \frac{e_{\C^*}(\cO(D)_{0,n,\dd})}{e_{\C^*}(\cO(D/r)_{0,n,\dd})}
\right]_{\lambda=0},
\]
where $i:X_{D,r}\hookrightarrow Y_{X_\infty,r}$ and $i^\prime:\fX_r\hookrightarrow Y_{X_\infty,r}$ are the embeddings.
\end{thm}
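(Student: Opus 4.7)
The plan is to combine the hypersurface realization of $X_{D,r}$ inside a toric stack bundle (from Section \ref{sec:hyp}) with $\C^*$-virtual localization on $Y_{X_\infty,r}$. The equivariant parameter $\lambda$ in the statement will arise as the weight of a $\C^*$-action that scales the $\cO_X(-D)$ summand of $Y$.

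First I would introduce the $\C^*$-action on $Y=\bbP(\cO_X(-D)\oplus\cO_X)$ that fixes the zero section $X_0\cong X$ and the infinity section $X_\infty\cong X$ pointwise; this action extends naturally to the root stack $Y_{X_\infty,r}$. The hypersurface construction then realizes $X_{D,r}$ as the zero locus of a $\C^*$-equivariant section of a specific line bundle $\cL$ on $Y_{X_\infty,r}$ carrying weight $\lambda$. The auxiliary stack $\fX_r$ is identified with the toric stack bundle over $X$ (a $\bbP^1$-bundle with one root-stacked fiber direction) that serves as the ambient space of this hypersurface realization, embedded in $Y_{X_\infty,r}$ via $i^\prime$.

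Next I would apply orbifold quantum Lefschetz equivariantly. Using the root-stack adaptation of \cite{Tseng} (or alternatively the combination of \cite{CCIT09} with the argument of \cite{KKP} referenced in the introduction), the GW integral over $\bM_{0,n}(X_{D,r},\dd)$ is identified with a twisted integral over $\bM_{0,n}(\fX_r,\dd)$ whose twist is the $\C^*$-equivariant Euler class of $R\pi_* f^*\cL$. The $\bar\psi_j$ classes appear because they are the cotangent line classes on the moduli of stable maps to the ambient $\fX_r$ (rather than the stacky $\psi$-classes on the hypersurface $X_{D,r}$), and the insertions become $\on{ev}_j^*(i^{\prime*}\gamma_j)$ by compatibility of the embeddings $i$ and $i^\prime$.

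Finally I would verify the identity $e_{\C^*}(R\pi_* f^*\cL)=e_{\C^*}(\cO(D)_{0,n,\dd})/e_{\C^*}(\cO(D/r)_{0,n,\dd})$. This should follow from a short exact sequence expressing $\cL$ as a virtual difference of $\cO(D)$-type and $\cO(D/r)$-type line bundles in $K$-theory, together with compatibility of pushforward under $R\pi_*$; evaluating the resulting equivariant identity at $\lambda=0$ then yields the stated non-equivariant formula. The main obstacle will be the careful tracking of the obstruction theory at boundary strata of $\bM_{0,n}(Y_{X_\infty,r},\dd)$ where stable maps develop components mapping into the infinity gerbe $\cD_\infty\subset Y_{X_\infty,r}$: these strata must be shown to contribute compatibly so that the $\lambda\to 0$ limit cleanly extracts the non-equivariant piece, in the spirit of the graph-space localization argument used for relative and orbifold invariants.
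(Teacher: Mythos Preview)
Your high-level plan (quantum Lefschetz plus $\C^*$-localization on $Y_{X_\infty,r}$) matches the paper, but the execution contains a genuine misidentification that makes the middle steps unworkable.

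The stack $\fX_r$ is \emph{not} the toric stack bundle serving as the ambient space of the hypersurface; that bundle is $Y_{X_\infty,r}$ itself. In the paper $\fX_r$ is the $\mu_r$-gerbe of $r$-th roots of $\cO_X(D)$ over $X$, concretely realized as the $\C^*$-fixed locus $p^{-1}(X_\infty)\subset Y_{X_\infty,r}$. In particular $X_{D,r}$ is \emph{not} a hypersurface in $\fX_r$, so there is no quantum Lefschetz step relating $X_{D,r}$ directly to $\fX_r$. The correct chain is: quantum Lefschetz identifies the invariant of $X_{D,r}$ with the $p^*\cO_Y(1)$-twisted invariant of $Y_{X_\infty,r}$ (this is Theorem~\ref{thm:main}), and then one localizes the latter with respect to the fiberwise $\C^*$-action.

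Consequently the ratio $e_{\C^*}(\cO(D)_{0,n,\dd})/e_{\C^*}(\cO(D/r)_{0,n,\dd})$ does not arise from a short exact sequence expressing a single bundle $\cL$ as a virtual difference. It is the localization residue at the fixed locus $\fX_r$: the numerator is the restriction of the twisting class $e_{\C^*}\big((p^*\cO_Y(1))_{0,n,i_*\dd}\big)$, and the denominator is the inverse Euler class of the virtual normal bundle, since $N_{\fX_r/Y_{X_\infty,r}}\cong \cO(D/r)$ with weight $1/r$. The step you are missing entirely is the \emph{vanishing} of every other localization graph: for any edge $e$ of degree $k$, the twisting contributes a factor $\prod_{j=0}^{k}\frac{k-j}{k}(D+\lambda)$, whose $j=k$ term is zero. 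This kills all graphs with at least one edge, so the single-vertex graph over $p^{-1}(X_\infty)$ is the only survivor and the formula follows. Your final paragraph anticipates difficulties at strata mapping into the infinity gerbe, but those are precisely the strata that \emph{do} contribute; the actual work is showing everything else vanishes.
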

Here on the right hand side, $\fX_r$ is the gerbe of $r$-th roots of $\cO(D)$ over $X$,
and $\cO(D)_{0,n,\dd}\in K^0(\bM_{0,n}(\fX_r,\dd))$ is given by pulling back $\cO(D)$ to the universal curve
and pushforward to the moduli space. $\cO(D/r)$ is the universal sheaf on the root gerbe, and similarly $\cO(D/r)_{0,n,\dd}$ is the pull-back and push-forward. $\cO(D)$ and $\cO(D/r)$ have fiberwise $\C^*$-action of weights $1$ and $1/r$, respectively. $\lambda$ is the corresponding equivariant parameter.

In short, this theorem says that genus-zero Gromov--Witten theory of a root stack is nothing but the non-equivariant limit of a (doubly) twisted theory on a root gerbe. This can be used to provide a second proof of previous mirror theorem of root stacks.

\subsection{Relation to relative Gromov--Witten theory}
Gromov--Witten theory of root stacks is important also because it is naturally related to relative Gromov--Witten theory. Abramovich-Caman-Wise \cite{ACW} proved that genus-zero Gromov--Witten invariants of root stacks are equal to the corresponding relative Gromov--Witten invariants if the roots are taken to be sufficiently large. The relation between higher genus invariants has recently been carried out by the second and the third authors in \cite{TY18a} and \cite{TY18b}. However, the correspondence between relative and orbifold invariants in \cite{ACW} and \cite{TY18b} only contains orbifold invariants whose orbifold markings are of small ages. That is, the ages are of the form $i/r$, for $r$ sufficiently large. In \cite{FWY}, the correspondence between genus-zero relative and orbifold invariants has been generalized to include orbifold invariants with large ages. The extra orbifold invariants correspond to relative invariants with negative contact orders as defined in \cite{FWY}. Givental's formalism for genus-zero relative Gromov--Witten theory has also been worked out in \cite{FWY}.

Although mirror symmetry for absolute Gromov--Witten theory has been intensively studied over the past two decades, mirror symmetry for relative Gromov--Witten theory has been missing in the literature for a long time. To the best of the authors' knowledge, mirror symmetry with relative Gromov--Witten invariants was first studied in M. van Garrel's thesis \cite{vG} for genus-zero relative Gromov--Witten theory of toric del Pezzo surfaces with maximal tangency along smooth effective anticanonical divisors.  

The relation between relative and orbifold invariants allows one to formulate relative mirror symmetry as a limit of mirror symmetry for root stacks. In Section \ref{rel-mirror}, we obtain a Givental style mirror theorem for relative Gromov--Witten invariants. That is, the $I$-function for relative Gromov--Witten invariants lies in Givental's Lagrangian cone for relative invariants, as defined in \cite{FWY}. In particular, the result in \cite{ACW} and \cite{TY18b} is already sufficient if one only considers the restricted $J$-function, defined in Section \ref{rel-mirror}, where each invariant only contains one relative marking. We obtain the $I$-function for relative Gromov--Witten invariants of $(X,D)$ with maximal tangency condition along the divisor $D$ by passing the $I$-function for root stacks to the limit. Therefore, we first obtain the relation between the restricted $J$-function of $(X,D)$ and the non-extended $I$-function of $(X,D)$ without using Givental's formalism for relative invariants.

\begin{theorem}[=Theorem \ref{thm-rel}]\label{intro-thm-rel}
Given a smooth projective variety $X$ and a smooth nef divisor $D$ such that the class $-K_X-D$ is nef. The non-extended $I$-function for relative Gromov--Witten invariants of $(X,D)$ is
\begin{align}
I_{(X,D)}(Q,t,z)
=\sum_{\dd\in \on{NE}(X)}J_{X, \dd}(t,z)Q^{\dd}\left(\prod_{0<a\leq D\cdot \dd-1}(D+az)\right)[\mathbf 1]_{-D\cdot \dd},
\end{align}
where we refer to Section \ref{rel-mirror} for the notation.
The non-extended $I$-function $I_{(X,D)}(Q,z)$ is equal to the restricted $J$-function $J_{(X,D)}([t^\prime]_0,z)$ for the relative Gromov--Witten invariants of $(X,D)$ after change of variables.
\end{theorem}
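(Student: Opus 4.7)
The approach is to derive the relative $I$-function and its identification with the restricted $J$-function by taking the $r\to\infty$ limit of the root stack mirror theorem (Theorem \ref{main-theorem}) and invoking the orbifold/relative correspondence of \cite{ACW}, \cite{TY18b}, and \cite{FWY}.

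For fixed curve class $\dd$, I would first analyze the hypergeometric factor in $I_{X_{D,r}}(Q,t,z)$. With $D_r=D/r$ and $D_r\cdot\dd=(D\cdot\dd)/r$, for $r>D\cdot\dd$ the denominator $\prod_{\langle a\rangle=\langle D_r\cdot\dd\rangle,\,0<a\leq D_r\cdot\dd}(D_r+az)$ reduces to the single factor $(D+D\cdot\dd\,z)/r$ at $a=D\cdot\dd/r$. Cancelling this against the top factor $(D+D\cdot\dd\,z)$ of the numerator leaves
\[
r\cdot\prod_{0<a\leq D\cdot\dd-1}(D+az).
\]
As $r\to\infty$ the age $\langle-D_r\cdot\dd\rangle$ approaches $1$, and the orbifold/relative correspondence of \cite{FWY} identifies the rescaled element $r\,\mathbf 1_{\langle-D_r\cdot\dd\rangle}$ with the negative-contact-order class $[\mathbf 1]_{-D\cdot\dd}$ of the augmented relative state space; the factor of $r$ is exactly the normalization discrepancy between the two bases. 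Combining this with the computation above yields the stated formula for $I_{(X,D)}$.

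For the second assertion, rather than appealing to Givental's formalism for relative invariants, I would work on the $J$-function side. By Theorem \ref{main-theorem}, and using that the nefness of $-K_X-D$ provides the positivity needed to make $I_{X_{D,r}}$ have asymptotics $\mathbf 1+O(z^{-1})$, the $I$-function agrees with the orbifold $J$-function of $X_{D,r}$ after a mirror map. Restricting to correlators with a single orbifold marking and passing to the limit $r\to\infty$ via the correspondence of \cite{FWY} converts the restricted orbifold $J$-function into the restricted relative $J$-function $J_{(X,D)}([t']_0,z)$, which gives the desired equality up to the mirror-map change of variables. The main obstacle is the normalization step above: precisely matching the factor of $r$ produced by the hypergeometric simplification with the normalization built into the negative-contact-order basis of \cite{FWY}, and verifying that the single-marking specialization commutes with the $r\to\infty$ limit in the correspondence.
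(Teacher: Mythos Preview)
Your proposal is correct and follows essentially the same route as the paper: simplify the hypergeometric factor of $I_{X_{D,r}}$ for $r>D\cdot\dd$ so that only $r\cdot\prod_{0<a\le D\cdot\dd-1}(D+az)$ survives, and then pass from the root-stack $J$-function to the restricted relative $J$-function via the orbifold/relative correspondence. The paper resolves what you flag as the ``main obstacle'' by noting that the leftover factor of $r$ is exactly the gerbe factor in the orbifold Poincar\'e pairing on the twisted sector $D_r$, so that $r\,\mathbf 1_{\langle-(D\cdot\dd)/r\rangle}$ is identified with $[\mathbf 1]_{-D\cdot\dd}$ when matching dual bases; note also that $[\mathbf 1]_{-D\cdot\dd}$ arises here as the dual of a \emph{positive} contact-order insertion rather than as a genuine negative-contact marking.
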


The non-extended $I$-function of $(X,D)$ coincides with the $I$-function for the local Gromov--Witten theory of $\cO_X(-D)$ (up to signs). Hence, Theorem \ref{intro-thm-rel} should be related to the relation between relative invariants of $(X,D)$ and local Gromov--Witten invariants of $\cO_X(-D)$ in \cite{vGGR} when the divisor is smooth and there is one relative marked point.

To state a mirror formula for relative invariants with more than one relative marked points, as well as relative invariants with negative contact orders, we consider the $S$-extended $I$-function $I_{(X,D)}^{S}(Q,x,t,z)$ for relative invariants. We refer the readers to Section \ref{sec:extended-rel-I} for the definition of $I_{(X,D)}^{S}(Q,x,t,z)$. The following theorem follows.
\begin{theorem}[=Theorem \ref{thm-rel-I-extended}]
The $S$-extended $I$-function $I_{(X,D)}^{S}(Q,x,t,z)$ for relative invariants lies in Givental's Lagrangian cone for relative invariants as defined in \cite[Section 7.5]{FWY}.
\end{theorem}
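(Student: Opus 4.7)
The plan is to deduce Theorem \ref{thm-rel-I-extended} from the orbifold statement Theorem \ref{thm-orb-extended} by passing to the limit $r\to\infty$, and then reinterpreting the orbifold Lagrangian cone condition on $X_{D,r}$ as the relative Lagrangian cone condition for $(X,D)$ via the genus-zero orbifold/relative correspondence of \cite{FWY}.

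First, I would examine the $S$-extended orbifold $I$-function $I_{X_{D,r}}^{S}(Q,x,t,z)$ coefficient by coefficient. For fixed $\dd\in \on{NE}(X)$ and a fixed tuple $(k_1,\ldots,k_m)$, the hypergeometric prefactor in Theorem \ref{thm-orb-extended} involves products over $a$ satisfying $\langle a\rangle = \langle D_r\cdot \dd - \sum k_i a_i/r\rangle$, with $D_r = D/r$ acting essentially as a formal symbol. As $r$ grows, each such product stabilizes, and the twisted sector index $\langle -D_r\cdot \dd + \sum k_i a_i/r\rangle$ records a rational age that, in the limit, splits naturally into two cases: small ages correspond to positive contact orders, while ages close to $1$ correspond to negative contact orders in the sense of \cite{FWY}. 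This age-to-contact-order dictionary is exactly the one used to define the relative state space and Lagrangian cone in \cite[Section 7.5]{FWY}, so the limit of each coefficient of $I_{X_{D,r}}^{S}$ lands in the correct summand of the relative state space, reproducing $I_{(X,D)}^{S}(Q,x,t,z)$.

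Second, I would invoke the genus-zero orbifold/relative correspondence of \cite{FWY}: for each stable configuration of markings with prescribed (possibly negative) contact orders, the corresponding orbifold Gromov--Witten invariants of $X_{D,r}$ stabilize for $r$ sufficiently large, and agree with the relative invariants of $(X,D)$. Applied component by component, this translates the statement ``$I_{X_{D,r}}^{S}\in \cL_{X_{D,r}}$'' for every large $r$ into a statement about the pairings of $I_{(X,D)}^{S}$ with relative descendant classes, which by \cite[Section 7.5]{FWY} is precisely membership in the relative Lagrangian cone.

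The main obstacle will be making the limit rigorous at the level of the Lagrangian cone itself rather than just at the level of coefficients. The cones $\cL_{X_{D,r}}$ depend on $r$ through both the state space and the Chen--Ruan pairing, whereas the relative Lagrangian cone of \cite{FWY} is defined intrinsically on an enlarged state space (with basis indexed by contact orders, including negative ones). One must check that the orbifold pairing restricts to the relative pairing on the stabilizing part of the state space, that twisted sectors of large age are consistently identified with negative-contact-order sectors, and that the ``range'' condition characterizing the cone (expressed via polynomiality in $1/z$ after suitable recentering) passes to the limit. Once these compatibilities are verified, the theorem follows by taking $r$ large enough so that all relevant ages, contact orders, and hypergeometric factors in $I_{(X,D)}^{S}(Q,x,t,z)$ are already in the stable range, reducing the claim directly to Theorem \ref{thm-orb-extended}.
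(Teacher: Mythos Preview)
Your proposal is correct and follows essentially the same route as the paper: obtain $I_{(X,D)}^{S}$ as the large-$r$ limit of the orbifold $S$-extended $I$-function coefficient by coefficient, and then transfer the cone statement from Theorem \ref{thm-orb-extended} to the relative setting via the genus-zero orbifold/relative correspondence and the Lagrangian cone formalism of \cite[Section 7.5]{FWY}. The paper carries out the limit computation explicitly (splitting into $I_+$ and $I_-$ according to the sign of $D\cdot \dd-\sum k_i a_i$) and leaves the cone-compatibility step implicit in the reference to \cite{FWY}; your third paragraph makes that implicit step explicit, but the strategy is the same.
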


\subsection{Acknowledgment}
F.Y. would like to thank Qile Chen, Charles Doran and Melissa Liu for helpful discussions. H. F. is supported by grant ERC-2012-AdG-320368-MCSK and SwissMAP. H.-H. T. is supported in part by NSF grant DMS-1506551. F. Y. is supported by a postdoctoral fellowship funded by NSERC and Department of Mathematical Sciences at the University of Alberta.
\section{Preliminary}

\subsection{Orbifold Gromov--Witten theory}

In this section, we briefly recall the definition of genus-zero orbifold Gromov--Witten invariants and Givental's formalism. General theory of orbifold Gromov--Witten invariants can be found in \cite{Abramovich}, \cite{AGV02}, \cite{AGV}, \cite{CR} and, \cite{Tseng}.

Let $\mathcal X$ be a smooth proper Deligne--Mumford stack with projective coarse moduli space $X$. The Chen--Ruan orbifold cohomology $H^*_{\on{CR}}(\mathcal X)$ of $\mathcal X$ is the cohomology of the inertia stack $I\mathcal X$ with gradings shifted by ages. We consider the moduli stack $\overline{M}_{0,n}(\mathcal X, \dd)$ of $n$-pointed genus-zero degree $\dd$ stable maps to $\mathcal X$ with sections to gerbes at the markings (see \cite[Section 4.5]{AGV}, \cite[Section 2.4]{Tseng}). Given cohomological classes $\gamma_i\in H_{\on{CR}}^*(\mathcal X)$ and nonnegative integers $a_i$, for $1\leq i\leq n$, the genus-zero orbifold Gromov--Witten invariants of $\mathcal X$ are defined as follows
\begin{align}
\left\langle \prod_{i=1}^n \tau_{a_i}(\gamma_i)\right\rangle_{0,n,\dd}^{\mathcal X}:=\int_{[\overline{M}_{0,n}(\mathcal X, \dd)]^{w}}\prod_{i=1}^n(\on{ev}^*_i\gamma_i)\bar{\psi}_i^{a_i},
\end{align}
where, 
\begin{itemize}
\item $[\overline{M}_{0,n}(\mathcal X, \dd)]^{w}$ is the the weighted virtual fundamental class in \cite[Section 4.6]{AGV02} and \cite[Section 2.5.1]{Tseng}. 
    \item 
for $i=1,2,\ldots,n$,
\[
\on{ev}_i: \overline{M}_{0,n}(\mathcal X,\dd) \rightarrow I\mathcal X
\]
is the evaluation map;
\item
$\bar{\psi}_i\in H^2(\overline{M}_{0,n}(\mathcal X, \dd),\mathbb Q)$
is the descendant class.
\end{itemize}
The genus-zero Gromov--Witten potential of $\mathcal X$ is 
\[
\mathcal F_{\mathcal X}^0({\bf t}):= \sum_{n, \dd}\frac{Q^{\dd}}{n!}\langle {\bf t,\ldots, t}\rangle_{0,n, \dd}^{\mathcal X},
\]
where $Q$ is the Novikov variable and
\[
{\bf t}=\sum_{i\geq 0}t_iz^i\in H^*_{\on{CR}}(\mathcal X)[z].
\].

The Givental's formalism about the genus-zero orbifold Gromov--Witten invariants in terms of a Lagrangian cone in Givental's symplectic vector space was developed in \cite{Tseng}. The Givental's symplectic vector space is
\[
\mathcal H:=H^*_{\on{CR}}(\mathcal X,\mathbb C)\otimes \mathbb C[\![\on{NE}(\mathcal X)]\!][z,z^{-1}]\!],
\]
where $\on{NE}(\mathcal X)$ is the Mori cone of $\mathcal X$. The symplectic form on $\mathcal H$ is defined as
\[
\Omega(f,g):=\on{Res}_{z=0}(f(-z),g(z))_{\on{CR}}dz,
\]
where $(-,-)_{\on{CR}}$ is the orbifold Poincar\'e pairing of the Chen-Ruan cohomology $H^*_{\on{CR}}(\mathcal X)$ of $\mathcal X$.

We consider the polarization
\[
\mathcal H=\mathcal H_+\oplus \mathcal H_-,
\]
\[
\mathcal H_+=H^*_{\on{CR}}(\mathcal X,\mathbb C)\otimes \mathbb C[\![\on{NE}(\mathcal X)]\!][z], \quad \mathcal H_-=z^{-1}H^*_{\on{CR}}(\mathcal X,\mathbb C)\otimes \mathbb C[\![\on{NE}(\mathcal X)]\!][\![z^{-1}]\!].
\]
The Givental's Lagrangian cone $\mathcal L_{\mathcal X}$ is defined as the graph of the differential of $\mathcal F^0_{\mathcal X}$ in the dilaton-shifted coordinates. That is,
\[
\mathcal L_{\mathcal X}:=\{(p,q)\in \mathcal H_-\oplus \mathcal H_+| p=d_q\mathcal F^0_{\mathcal X}\} \subset \mathcal H.
\]
An important slice of $\mathcal L_{\mathcal X}$ is the $J$-function:
\[
J_{\mathcal X}(t,z):=z+t+\sum_{n, \dd}\sum_{\alpha}\frac{Q^{\dd}}{n!}\left\langle \frac{\phi_\alpha}{z-\bar{\psi}},t,\ldots,t\right\rangle_{0,n+1, \dd}\phi^{\alpha},
\]
where 
\[
\{\phi_\alpha\}, \{\phi^\alpha\}\subset H^*_{\on{CR}}(\mathcal X)
\]
are additive bases dual to each other under orbifold Poincar\'e pairing and, 
\[
t=\sum_{\alpha}t^\alpha\phi_\alpha\in H^*_{\on{CR}}(\mathcal X).
\]
One can decompose the $J$-function according to the degree of curves
\[
J_{\mathcal X}(t,z)=\sum_{\dd}J_{\mathcal X, \dd}(t,z)Q^{\dd}.
\]

Givental-style mirror theorems can be formulated in terms of the Lagrangian cone $\mathcal L_{\mathcal X}$. For example, the mirror theorem in \cite{Brown} (resp. \cite{JTY}) for toric fibrations (resp. toric stack bundles) states that the $I$-function, certain hypergeometric modification of the $J$-function of the base, lies in Givental's Lagrangian cone for the target. For toric complete intersection stack $\mathcal Y$ cut out by a generic section of a convex vector bundle $\mathcal E$ with suitable assumptions (see \cite[Section 5]{CCIT14}), the $I$-function lies in Givental's Lagrangian cone for $\mathcal Y$.

We will also need to consider Gromov--Witten invariants twisted by the $\mathbb T$-equivariant (inverse) Euler class. We assume that $\mathbb T$ acts trivially on the target $\mathcal X$. Let $E\rightarrow \mathcal X$ be a vector bundle equipped with a $\mathbb T$-linearization. We consider 
\[
E_{0,n,\dd}:=R\pi_*\on{ev}^*E\in K_{\mathbb T}^0(\overline{M}_{0,n}(\mathcal X, \dd)),
\]
where $\pi$ and $\on{ev}$ give the universal family:
\begin{displaymath}
    \xymatrix{ \mathcal C_{0,n,\dd} \ar[r]^{\on{ev}}\ar[d]^{\pi} & \mathcal X\\
  \overline{M}_{0,n}(\mathcal X, \dd)&}.
\end{displaymath}
The $(e_{\mathbb T}, E)$-twisted genus-zero Gromov--Witten invariants are defined to be
\begin{align}
\left\langle \prod_{i=1}^n \tau_{a_i}(\gamma_i)\right\rangle_{0,n,\dd,(e_{\mathbb T}, E)}^{\mathcal X}:=\int_{[\overline{M}_{0,n}(\mathcal X, \dd)]^{w}}\prod_{i=1}^n(\on{ev}^*_i\gamma_i)\bar{\psi}_i^{a_i}e_{\mathbb T}(E_{0,n,\dd}).
\end{align}
The $(e_{\mathbb T}^{-1}, E)$-twisted invariants are defined by replacing $e_{\mathbb T}(E_{0,n,\dd})$ with $e_{\mathbb T}^{-1}(E_{0,n,\dd})$.

\subsection{Root construction}
Given a smooth Deligne--Mumford stack $\mathcal X$, an effective smooth divisor $\mathcal D\subset \mathcal X$, and a positive integer $r$, the $r$-th root stack of $(\mathcal X,\mathcal D)$ is denoted by $\mathcal X_{\mathcal D,r}$. The Deligne--Mumford stack $\mathcal X_{\mathcal D,r}$ can be defined as the stack whose objects over a scheme $f: S\rightarrow \mathcal X$ consist of triples
\[
(M,\phi,\tau)
\]
where
\begin{enumerate}
\item $M$ is a line bundle over $S$,
\item $\phi:M^{\otimes r}\cong f^*\cO_{\mathcal X}(\mathcal D)$ is an isomorphism,
\item $\tau$ is a section of $M$ such that $\phi(\tau^r)$ is the tautological section of $f^*\cO_{\mathcal X}(\mathcal D)$.
\end{enumerate}

There is a natural morphism $f:\mathcal X_{\mathcal D,r}\rightarrow \mathcal X$, which is an isomorphism over $\mathcal X\setminus \mathcal D$.

When $\mathcal X$ is a scheme $X$, the inertia stack of the root stack $X_{D,r}$ can be decomposed into a disjoint union of $r$ components
\[
I(X_{D,r})=X_{D,r}\sqcup D_r\sqcup\cdots \sqcup D_r,
\]
where there are $r-1$ twisted components of $\mu_r$-gerbes $D_r$ over $D$. Roughly speaking, we see it as a way to add stacky structure to $X$ along $D$. 

The root stack is the root of a line bundle with a section. In Section \ref{subsec:refine}, we will also consider the root of a line bundle $L$ over a scheme $X$ which is called a root gerbe $\fX_r$ over $X$. Objects of $\fX_r$ over a scheme $f: S\rightarrow X$ consist of a line bundle $M$ over $S$ and an isomorphism 
\[
\phi:M^{\otimes r}\cong f^*L.
\]
The stack $\fX_r$ is a gerbe over $X$ banded by $\mu_r$. We refer to \cite[Appendix B]{AGV} for more detailed discussions about root constructions.

\section{A mirror theorem for root stacks}\label{sec:hyper-cons}

Let $X$ be a smooth projective variety and $D\subset X$ be a smooth nef divisor. We want to construct the root stack $X_{D,r}$ as a hypersurface in a weighted projective bundle, then apply known mirror theorems to obtain the $I$-function.
By \cite[Section E]{CCGK} and \cite{LLW}, the blow-up of $X$ along a smooth center can be constructed as a complete intersection in a projective bundle $Y$ over $X$. Motivated by their construction, we describe root stacks in a similar way. 





\subsection{A hypersurface construction}\label{sec:hyp}

Consider the line bundle 
\[
L:=\cO_X(-D),
\] 
and the projectivization 
\[Y:=\bbP(L\oplus\cO_X) \xrightarrow{\pi} X.
\]
Geometrically, $Y$ is the compactification of the total space of $L$. The zero section is 
\[
X_0=\bbP(\cO_X)\subset \bbP(L\oplus\cO_X).
\]
Also let 
\[
X_\infty= \bbP(L)\subset \bbP(L\oplus\cO_X).
\]
$X_\infty$ is the exact divisor we add in at infinity to compactify $L$. There is an invertible sheaf $\cO_Y(1)$ corresponding to the section $X_\infty$ (as a divisor class). Denote
\[
h=c_1(\cO_Y(1)).
\]
Choose a section $\sigma$ of $\cO_X(D)$ whose vanishing locus is the divisor $D$. This section defines a homomorphism of sheaves 
\[
f=(\sigma\oplus 1)^*\in \Hom_X(L\oplus\cO_X,\cO_X).
\]
The section $f$ determines a section 
\[
\tilde{f}\in H^0(Y,\cO_Y(1)),
\]
by the canonical identifications
\[
\Hom_X(L\oplus\cO_X,\cO_X)=H^0(X,(L\oplus\cO_X)^*)=H^0(X,\pi_*\cO_Y(1))=H^0(Y,\cO_Y(1)).
\]
Following the argument in \cite[Lemma E.1]{CCGK}, we can determine the zero locus of the section $\tilde{f}$ of $\cO_Y(1)$ locally over $X$. Locally, the zero locus of the section $\tilde{f}$ of $\cO_Y(1)$ is defined by $s_0\sigma(x)+s_1=0$, where $s_0$ is the local equation of $X_0$ and $s_1$ is the local equation of $X_\infty$. If $s_0=0$, then $s_1= 0$, this can not happen. If $s_0$ is not $0$, then $\sigma(x)=-s_1/s_0$. Hence, $\tilde{f}^{-1}(0)=\sigma(X)\cong X$. Its intersection with $X_\infty$ is $D$.

\begin{center}
\begin{tikzcd}
& \cO_Y(1) \arrow[d] \\
X\cong\sigma(X)=\tilde{f}^{-1}(0) \arrow[r, hook, "i"]
& Y:=\bbP(L\oplus\cO_X) \arrow[d,"\pi"] \arrow[u, bend left, "\tilde{f}"]\\
& X
\end{tikzcd}.
\end{center}
The above construction can be viewed as a special case of the geometric construction of blow-up when the blow-up center is the divisor $D$. The blown-up variety is simply $X$. 

To provide some intuition, a geometric explanation could go as follows. Note that the normal bundle of $X_\infty$ is $L^*$. Moreover, the total space of $L^*$ actually embeds into $Y$ as a Zariski neighborhood of $X_\infty$. Recall that $\sigma$ is a section of $\cO_X(D)\cong L^*$. Under the embedding of $L^*$ into $Y$, the image $\sigma(X)$ becomes a section of $Y$ over $X$. One can check $\sigma(X)$ is the zero locus of $\tilde f$, and the image of $\sigma(X)$ intersect $X_\infty$ along $D$ simply because $\sigma$ vanishes at $D$. 

To construct the root stack $X_{D,r}$, we consider the projection
\[
p: Y_{X_\infty,r}\rightarrow Y
\]
from the stack of $r$-th roots of $Y$ along the infinity section $X_\infty$. 
The section $\tilde{f}$ pulls back to a section $p^*(\tilde{f})$ of $p^*(O_Y(1))$. The zero locus $p^*(\tilde{f})^{-1}(0)$ is the inverse image of $\tilde f^{-1}(0)$ via $p$. This inverse image is isomorphic to $X_{D,r}$.

\begin{center}
\begin{tikzcd}
& p^*(\cO_Y(1)) \arrow[r] \arrow[d]&\cO_Y(1) \arrow[d] \\
p^*\tilde{f}^{-1}(0)=X_{D,r} \arrow[r, hook, "i"]
& Y_{X_\infty,r} \arrow[d] \arrow[r, "p"] \arrow[u, bend left, "p^*\tilde{f}"] & Y\arrow[d,"\pi"] \arrow[u, bend left, "\tilde{f}"]\\
& X & X
\end{tikzcd}.
\end{center}

\subsection{Construction of the $I$-function}
In this section, the Novikov variable $q^n$ corresponds to $n$ times of fiber classes of $Y$ over $X$, and $Q^{\dd}$ corresponds to the image of $\dd$ under the embedding $\NE(X_\infty)\subset\NE(Y)$. We also represent a curve class $\beta\in \NE(Y)$ by $(\dd,n)$ under this decomposition.

Recall that a line bundle $F$ over $\mathcal X$ is called convex if $H^1(\mathcal C,f^*F)=0$ for all genus-zero orbifold stable maps $f:\mathcal C\rightarrow \mathcal X$. 
\begin{lemma}\label{lem:convex}
$p^*\cO_Y(1)$ is a convex line bundle.
\end{lemma}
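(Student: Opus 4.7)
The plan is to reduce the vanishing of $H^1$ on an orbifold source curve to a vanishing statement on its coarse moduli, and then to exploit the nefness of $\cO_Y(1)$ on $Y$.

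First I would observe that $p\colon Y_{X_\infty,r}\to Y$ is the coarse moduli morphism of the root stack, so $p^*\cO_Y(1)$ has trivial monodromy along the gerbe $(X_\infty)_r$. For any genus-zero twisted stable map $f\colon \mathcal C\to Y_{X_\infty,r}$, the composition $p\circ f\colon \mathcal C\to Y$ factors through the coarse moduli map $\pi_{\mathcal C}\colon \mathcal C\to C$ (since $Y$ is a scheme), producing an ordinary prestable map $\bar f\colon C\to Y$ with $f^*p^*\cO_Y(1)\cong \pi_{\mathcal C}^*\bar f^*\cO_Y(1)$. Using $R\pi_{\mathcal C*}\cO_{\mathcal C}=\cO_C$ for a nodal twisted curve and the projection formula, I would conclude
\[
H^1(\mathcal C,\, f^*p^*\cO_Y(1)) \cong H^1(C,\, \bar f^*\cO_Y(1)),
\]
so the problem reduces to a genus-zero vanishing on an ordinary nodal curve.

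Next I would verify that $\cO_Y(1)=\cO_Y(X_\infty)$ is nef on $Y$. Any irreducible curve in $Y$ not contained in $X_\infty$ meets it nonnegatively by effectivity. For a curve inside $X_\infty=\bbP(L)\cong X$, the standard normal-bundle computation for a subbundle of a rank-two projective bundle gives
\[
\cO_Y(1)|_{X_\infty}=N_{X_\infty/Y}=L^\vee=\cO_X(D),
\]
which is nef by hypothesis. Hence $\bar f^*\cO_Y(1)$ has nonnegative degree on every component of the tree of rational curves $C$, and a routine induction on the number of components using the normalization short exact sequence at a leaf yields $H^1(C,\bar f^*\cO_Y(1))=0$.

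The main delicate point will be the first step: one must check carefully that $p^*\cO_Y(1)$ genuinely descends from the coarse moduli (equivalently, that the $\mu_r$-stabilizers along $(X_\infty)_r$ act trivially on its fibers) and that the projection formula for the coarse moduli morphism $\pi_{\mathcal C}$ applies in this orbifold setup. Once that reduction is in place, the nefness check and the tree-of-$\bbP^1$ vanishing are standard.
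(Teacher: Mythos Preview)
Your proposal is correct and follows essentially the same approach as the paper: both reduce convexity of $p^*\cO_Y(1)$ to convexity of $\cO_Y(1)$ on the coarse moduli $Y$. The paper's proof is a single sentence asserting that $\cO_Y(1)$ is convex on $Y$ and that pullback from the coarse moduli preserves convexity; you have simply supplied the details for both steps (the coarse-moduli reduction via $R\pi_{\mathcal C*}\cO_{\mathcal C}=\cO_C$, and the nefness check for $\cO_Y(1)$ using $\cO_Y(1)|_{X_\infty}\cong\cO_X(D)$).
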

\begin{proof}
$p^*\cO_Y(1)$ is convex because it is a pullback of a convex line bundle $\cO_Y(1)$ on the coarse moduli space $Y$ of $Y_{X_\infty,r}$.
\end{proof}

Recall that
\[
h=c_1(\cO_Y(1)).
\]
We can now construct the $I$-function of $X_{D,r}$ in terms of the $J$-function of $X$ as follows. $Y_{X_\infty,r}$ is a toric stack bundle over $X$ with the fibers isomorphic to the weighted projective line $\bbP^1_{1,r}$. Therefore, by the mirror theorem for toric stack bundles \cite{JTY}, \cite{You}, we can write the $I$-function of $Y_{X_\infty,r}$ as the hypergeometric modification of the $J$-function of $X$:

\begin{align*}
I_{Y_{X_\infty,r}}(Q,q,t,z)=&e^{(h\log q)/z}\sum_{\dd\in \on{NE}(X)}\sum_{n\geq 0}J_{X,\dd}(t,z)Q^{\dd}q^n\\
& \left(\frac{(\prod_{\langle a\rangle=\langle n/r\rangle,a\leq 0}(h/r+az))(\prod_{a\leq 0 }(h-D+az))}{(\prod_{\langle a\rangle=\langle n/r\rangle,a\leq n/r}(h/r+az))(\prod_{a\leq n-D\cdot \dd }(h-D+az))}\right)\mathbf 1_{\langle -n/r\rangle},
\end{align*}
where $\langle a\rangle$ is the fractional part of the rational number $a$.

Recall that $i$ is the embedding $X_{D,r}\hookrightarrow Y_{X_\infty,r}$. In view of Lemma \ref{lem:convex}, we may apply orbifold quantum Lefschetz \cite{Tseng}, \cite{CCIT09}. Denote the resulting $I$-function for the root stack as the following.
\begin{align*}
&\tilde{I}_{X_{D,r}}(Q,q,t,z)\\
=&e^{(D\log q)/z}\sum_{\dd\in \on{NE}(X)}\sum_{n\geq 0}J_{X, \dd}(t,z)Q^{\dd}q^n\\
\notag & i^*\left(\frac{\prod_{\langle a\rangle=\langle n/r\rangle,a\leq 0}(h/r+az)}{\prod_{\langle a\rangle=\langle n/r\rangle,a\leq n/r}(h/r+az)}\frac{\prod_{a\leq 0 }(h-D+az)}{\prod_{a\leq n-D\cdot \dd }(h-D+az)}\frac{\prod_{a\leq n}(h+az)}{\prod_{a \leq 0}(h+az)}\right)\mathbf 1_{\langle -n/r\rangle}.
\end{align*}
Here ``$\sim$" on the top of $I_{X_{D,r}}$ is to signify that it is not completely intrinsic to $X_{D,r}$ due to the extra Novikov variable $q$.

We have the relation
\[
i^*h=D.
\]
Recall that $D_r$ is the substack of $X_{D,r}$ isomorphic to an $r$-th root gerbe of $D$. As divisor classes, $D_r=D/r$. The $I$-function $\tilde I_{X_{D,r}}$ can be written as
\begin{align}\label{equation-$I$-function}
\tilde I_{X_{D,r}}(Q,q,t,z)=&e^{(D\log q)/z}\sum_{\dd\in \on{NE}(X)}\sum_{n\geq D\cdot \dd}J_{X, \dd}(t,z)Q^{\dd}q^n\\
\notag & \left(\frac{\prod_{\langle a\rangle=\langle n/r\rangle,a\leq 0}(D_r+az)}{\prod_{\langle a\rangle=\langle n/r\rangle,a\leq n/r}(D_r+az)}\frac{\prod_{a\leq n}(D+az)}{\prod_{a \leq 0}(D+az)} \frac{1}{(n-D\cdot \dd)!z^{n-D\cdot \dd}}\right)\mathbf 1_{\langle -n/r\rangle}.
\end{align}
It is worth noting that $n$ starts from $D\cdot \dd$ instead of $0$. If $n<D\cdot \dd$, we have
\[
i^*\dfrac{\prod_{a\leq 0 }(h-D+az)}{\prod_{a\leq n-D\cdot \dd }(h-D+az)}=0\]
because of numerator has a zero factor when $a=0$. 

The above $I$-function contains an extra Novikov variable $q$ coming from the ambient $\mathbb{P}^1_{1,r}$-bundle. To get an intrinsic expression of $X_{D,r}$, we need to restrict Novikov variables to $\on{NE}(X_{D,r})\subset \on{NE}(Y_{X_\infty,r})$. 
To elaborate the situation, we start with the following standard computation.


\begin{lemma}\label{lem:curve_class}
If a curve class $\beta\in \on{NE}(Y_{X_\infty, r})$ represented by $(\dd, n)$ as above is contained in $\on{NE}(X_{D,r})$ then $n=D\cdot \dd$.
\end{lemma}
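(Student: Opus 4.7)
Plan: The proof rests on a geometric constraint from Section~\ref{sec:hyp}: by construction $X_{D,r}$ sits above the section $\sigma(X)=\tilde f^{-1}(0)\subset Y$, and $\sigma(X)$ is a section of $\pi:Y\to X$ meeting $X_\infty$ transversally along $D$. So if $\beta\in\NE(Y_{X_\infty,r})$ is the pushforward $i_*\gamma$ of some effective $\gamma\in\NE(X_{D,r})$, pushing further down via $p:Y_{X_\infty,r}\to Y$ lands $p_*\beta$ inside $\sigma(X)\cong X$, so $p_*\beta=\sigma_*\dd'$ for a unique $\dd'\in\NE(X)$.

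The rest is intersection theory on the projective bundle $Y=\bbP(L\oplus\cO_X)$. Applying $\pi_*$ and using $\pi\circ\sigma=\id_X$ gives $\dd=\pi_*p_*\beta=\dd'$. To extract $n$, intersect $p_*\beta$ with $h=c_1(\cO_Y(1))=[X_\infty]$: by the projection formula together with the transverse intersection $\sigma(X)\cap X_\infty=D$, one computes $h\cdot p_*\beta=\sigma^*[X_\infty]\cdot\dd'=[D]\cdot\dd'=D\cdot\dd$. Matching this against the $(\dd,n)$-splitting of $\NE(Y)$, in which intersection with $h$ reads off the fiber coefficient (consistent with the paper's splitting convention, so that $h\cdot\beta=n$ on the sub-semigroup spanned by the section-classes and fiber), forces $n=D\cdot\dd$.

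The main obstacle is purely bookkeeping: reconciling the normalizations of the splitting $\NE(Y)\cong\NE(X)\oplus\Z[f]$ (and the induced splitting on $\NE(Y_{X_\infty,r})$, including the root-stack rescaling of the fiber class) so that intersection with $h$ genuinely isolates $n$. All of the substantive geometric input—namely $\pi\circ\sigma=\id_X$, the identification $\sigma^*[X_\infty]=[D]$, and transversality of $\sigma(X)\cap X_\infty$—has already been established in the hypersurface construction of Section~\ref{sec:hyp}, so only standard Chern-class identities on $\bbP(L\oplus\cO_X)$ remain to be invoked.
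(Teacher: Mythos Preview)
Your proof is correct and essentially the same as the paper's: both extract $n$ by pairing $\beta$ with $h=c_1(\cO_Y(1))$ and use that its restriction to the hypersurface is $D$. The paper compresses this into the one-line projection-formula computation $n=\int_{i_*\gamma}h=\int_\gamma i^*h=\pi^*D\cdot\gamma=D\cdot\dd$, directly invoking the relation $i^*h=D$ stated just before the lemma, whereas you unpack that same relation by passing through the section $\sigma$ and the transverse intersection $\sigma(X)\cap X_\infty=D$.
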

\begin{proof}
Suppose there is $\gamma\in \on{NE}(X_{D,r})$ such that $\beta=i_*\gamma$. Then $$n=\int_\beta c_1(\cO_Y(1))=\int_{i_*\gamma} h=\int_\gamma i^*h=(\pi^*D)\cdot \gamma=D\cdot (\pi_*\gamma)=D\cdot \dd.$$
\end{proof}
\if
The main concern in performing the restriction to $\on{NE}(X_{D,r})\subset \on{NE}(Y_{X_\infty,r})$ is Birkhoff factorization. Since the main reason for constructing the $I$-function is to use it to determine the $J$-function via Birkhoff factorization, we must make sure that the restriction process is compatible with Birkhoff factorization. 

The Birkhoff factorization procedure takes the form
$$U(z,z^{-1})=V(z^{-1})W(z),$$
where columns of $U$ are derivatives of the $I^{X_{D,r}}$, and columns of $V$ are derivatives of the twisted $J$-function of $Y_{X_\infty, r}$. This is an equality of formal series in Novikov variables. In other words, for each curve class $\beta \in \on{NE}(Y_{X_\infty, r})$, we have an equality $$U_\beta(z, z^{-1})=\sum_{\beta_1+\beta_2=\beta}V_{\beta_1}(z^{-1})W_{\beta_2}(z).$$ Here we write $U=\sum_\beta Q^\beta U_\beta$, $V=\sum_\beta Q^\beta V_\beta$, $W=\sum_\beta Q^\beta W_\beta$, and the splitting $\beta_1+\beta_2=\beta$ takes place in $\on{NE}(Y_{X_\infty,r})$.

\marginpar{HF: I like this argument. But one question. In the equation, $V,W$ might be composed with a mirror map. Why doesn't mirror map contain extra Novikov variables?}
{\it HHT: HF I think you're right, I overlooked the mirror map. I haven't changed the text here because I want to figure out the correct reasoning first. I think the idea is that we care about the cone of the twisted theory of $X_{D,r}$. Lemma \ref{lem:curve_class} says that $\on{NE}(X_{D,r})$ is contained in the affine space given by $n=D\cdot \dd$. So to recover the cone for the twisted theory, it is ok to first restrict to this affine space. The Birkhoff factorization procedure can change after restriction, but we do recover the cone of the twisted theory since the Novikov variables of $X_{D,r}$ are contained in the affine space. ThoughtS?}

\emph{ HF: Do you mean twisted theory of $Y_{X_\infty,r}$ instead of $X_{D,r}$? Not sure if I understand. But do you say that if we restrict the whole theory to the affine subspace containing $\on{NE}(X_{D,r})$, it satisfies those axioms of Lagrangian cone and thus forming a Lagrangian sub-cone? What makes me nervous is that in the topological recursion axiom, a curve in $\on{NE}(X_{D,r})$ might just break into two curves outside the affine space containing $\on{NE}(X_{D,r})$ (like $(d,0)+(0,D\cdot \dd)=(d,D\cdot \dd)$).}

\emph{Translating to our situation, I'm nervous about things like the following. Something like $Q^{\dd}$ might appear in mirror map. And then the Birkhoff factorization might involve $q^{D\cdot \dd}$. Both of these will be thrown away if we directly restrict the I-function to the affine space of $\on{NE}(X_{D,r})$. But they together influence the outcome. It seems hard to get rid of those by general argument (like degree reasons) because $D$ is quite arbitrary. I feel we probably need to do some induction. I agree that it feels like there should be a formal argument by playing with Lagrangian cone. But I can't think of a concrete idea so far.
}

The Euler class formula for virtual fundamental classes yields an explicit equality between the twisted $J$-function of $Y_{X_\infty,r}$ and the $J$-function of $X_{D,r}$, see for example, \cite[Section 5.2]{Tseng}. This formula implies that twisted $J$-function of $Y_{X_\infty,r}$ actually only depends on $\on{NE}(X_{D,r})$. If we restrict the Novikov variables in $U$ to $\on{NE}(X_{D,r})$, then in the splitting $\beta_1+\beta_2=\beta$ above, we have $\beta, \beta_1\in \on{NE}(X_{D,r})$. Lemma \ref{lem:curve_class} then implies that $\beta_2\in\on{NE}(X_{D,r})$. This shows that the restriction process is compatible with Birkhoff factorization.
\fi

Thus, by Lemma \ref{lem:curve_class}, a natural idea is to set $n=D\cdot \dd$ in \eqref{equation-$I$-function}. We replace $Q^{\dd}q^{D\cdot \dd}$ by $Q^{\dd}$ after rescaling $Q$. This yields the following expression.
\begin{align}\label{eqn:I}
I_{X_{D,r}}(Q,t,z)=&\sum_{\dd\in \on{NE}(X)}J_{X, \dd}(t,z)Q^{\dd}\times\\
&\left(\frac{\prod_{a\leq D\cdot \dd}(D+az)}{\prod_{a \leq 0}(D+az)}\right)
\notag  \left(\frac{\prod_{\langle a\rangle=\langle D_r\cdot \dd\rangle,a\leq 0}(D_r+az)}{\prod_{\langle a\rangle=\langle D_r\cdot \dd\rangle,a\leq D_r\cdot \dd}(D_r+az)}\right)\mathbf 1_{\langle -D_r\cdot \dd\rangle}.
\end{align}



Note that curve classes $(\dd,D\cdot \dd)$ lie on the boundary of the sub-cone $\{(\dd,n)\},n\geq D\cdot \dd$. We conclude that Birkhoff factorization and the mirror map of $\tilde I_{X_{D,r}}$ and the ones of $I_{X_{D,r}}$ have the same effect on the coefficient of $Q^{\dd}q^{D\cdot \dd}$ (corresponding to $Q^{\dd}$) after restricting the cohomology to the hypersurface $X_{D,r}\subset Y_{X_\infty,r}$. Recall that $D_r=D/r$ as a divisor class. 
Note that $D$ is a nef divisor (thus $D\cdot d\geq 0$).

\begin{theorem}\label{main-theorem}
Given a smooth projective variety $X$ and a smooth nef divisor $D$, the $I$-function of the $r$-th root stack $X_{D,r}$ of $(X,D)$ can be constructed as a hypergeometric modification of the $J$-function of $X$ as the following: 
\begin{align}
I_{X_{D,r}}(Q,t,z)=\sum_{\dd\in \on{NE}(X)} J_{X, \dd}(t,z)Q^{\dd}\left(\frac{\prod_{0<a\leq D\cdot \dd}(D+az)}{\prod_{\langle a\rangle=\langle D_r\cdot \dd\rangle,0<a\leq D_r\cdot \dd}(D_r+az)}\right)\mathbf 1_{\langle -D_r\cdot \dd\rangle},
\end{align}
where $\langle a\rangle$ is the fractional part of the rational number $a$.
The $I$-function\footnote{We need to require the cohomology classes at orbifold marking to be in $i^*H^*(X)$ instead of $H^*(D)$, because we apply quantum Lefschetz.} $I_{X_{D,r}}$ lies in Givental's Lagrangian cone $\mathcal L_{X_{D,r}}$ for $X_{D,r}$.
\end{theorem}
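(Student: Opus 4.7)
The overall strategy is to exploit the hypersurface construction of Section \ref{sec:hyp}, which realizes $X_{D,r}$ as the zero locus of $p^*\tilde f \in H^0(Y_{X_\infty,r}, p^*\cO_Y(1))$, and combine it with two already-established mirror results: the toric stack bundle theorem of \cite{JTY} and orbifold quantum Lefschetz \cite{Tseng,CCIT09}. The ambient stack $Y_{X_\infty,r}$ is a $\mathbb P^1_{1,r}$-bundle over $X$, so \cite{JTY} writes $I_{Y_{X_\infty,r}}$ as an explicit hypergeometric modification of $J_X$. Since $p^*\cO_Y(1)$ is the pullback of a convex line bundle from the coarse moduli $Y$, it is convex by Lemma \ref{lem:convex}, and orbifold quantum Lefschetz applies to yield a twisted $I$-function $\tilde I_{X_{D,r}}$ whose restriction to $X_{D,r}$ lies in $\mathcal L_{X_{D,r}}$ after accounting for the extra Novikov variable $q$.

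Carrying this out explicitly, the two hypergeometric factors combine into the expression \eqref{equation-$I$-function}. Using $i^*h = D$ and $D_r = D/r$, and the observation that the quantum-Lefschetz factor $\prod_{a\leq 0}(h-D+az)/\prod_{a\leq n-D\cdot \dd}(h-D+az)$ vanishes under $i^*$ when $n < D\cdot \dd$ (because its $a = 0$ factor in the numerator restricts to zero), one sees that $\tilde I_{X_{D,r}}$ is supported on curve classes $(\dd,n)$ with $n \geq D\cdot \dd$. This is a direct manipulation once the $I$-function of $Y_{X_\infty,r}$ is in hand.

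The next task is to pass from $\tilde I_{X_{D,r}}$, which is parametrized by $\on{NE}(Y_{X_\infty,r})$ with the extra Novikov variable $q$, to an intrinsic slice parametrized by $\on{NE}(X_{D,r})$. By Lemma \ref{lem:curve_class}, any curve class coming from $X_{D,r}$ satisfies $n = D\cdot \dd$, so we restrict to the affine subspace $\{n = D\cdot \dd\}$ and absorb $q^{D\cdot \dd}$ into the rescaling of $Q^{\dd}$. Setting $n = D\cdot \dd$ in \eqref{equation-$I$-function} collapses the $q^n$ sum to a single term (the factorial $(n - D\cdot \dd)!$ becomes $1$), and cancelling the common $a = 0$ factors in both numerator and denominator reduces the expression to the stated formula \eqref{eqn:I}.

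The main obstacle, and the step requiring the most care, is verifying that this naive restriction is compatible with the Lagrangian cone structure, so that the restricted series is a genuine slice of $\mathcal L_{X_{D,r}}$. A priori, Birkhoff factorization and the mirror map mix different $(\dd,n)$-coefficients, and one must rule out contributions that would take the restricted series out of $\{n = D\cdot \dd\}$. The key point is that $\tilde I_{X_{D,r}}$ is supported on the half-space $\{n \geq D\cdot \dd\}$ and the locus $\{n = D\cdot \dd\}$ is its boundary: any effective splitting $(\dd, D\cdot \dd) = (\dd_1,n_1) + (\dd_2,n_2)$ with $n_i \geq D\cdot \dd_i$ forces $n_i = D\cdot \dd_i$, since $D$ is nef and both $D\cdot \dd_i \geq 0$. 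Consequently Birkhoff factorization and the mirror map act diagonally on the $Q^{\dd}q^{D\cdot \dd}$-coefficients, so passing to the restriction produces the claimed slice of $\mathcal L_{X_{D,r}}$.
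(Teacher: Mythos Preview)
Your proposal is correct and follows essentially the same route as the paper: apply the toric stack bundle mirror theorem of \cite{JTY} to $Y_{X_\infty,r}$, then orbifold quantum Lefschetz via Lemma~\ref{lem:convex} to obtain $\tilde I_{X_{D,r}}$ as in \eqref{equation-$I$-function}, and finally restrict to $n=D\cdot\dd$ using Lemma~\ref{lem:curve_class}. Your justification of the restriction step---that $(\dd,D\cdot\dd)$ lies on the boundary of the support cone $\{n\geq D\cdot\dd\}$ so that effective splittings remain on the boundary---is exactly the paper's argument, stated somewhat more explicitly than the paper's own terse sentence.
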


\begin{remark}
 There are two reasons that we need to assume $X$ is a smooth projective variety instead of a smooth proper Deligne-Mumford stack. First, the mirror theorem for toric stack bundles in \cite{JTY} requires the base to be a smooth variety. Whether it holds for smooth Deligne-Mumford stack is currently not yet studied. Second, quantum Lefschetz principle can fail for orbifold hypersurfaces if the line bundle is not the pullback from the coarse moduli space \cite{CGIJJM}. Provided a mirror theorem for toric stack bundles with stacky base, the above construction of $I$-function can be extended to the case when $X$ is a smooth Deligne--Mumford stack and $\mathcal O_X(D)$ is convex.
\end{remark}


\subsection{A refinement of the hypersurface construction}\label{subsec:refine}
In fact, the hypersurface construction in Section \ref{sec:hyp} can be exploited further.
We put a $\C^*$-action on fibers of $Y_{X_\infty,r}$ so that on
the normal bundle of $X_\infty$, the weight is $1$. More precisely, under the presentation $Y=\bbP(L\oplus \cO)$, we need the $\C^*$ to act on $L$ with weight $-1$ and to act trivially on $\cO$. It induces an action on $Y_{X_\infty,r}$. The $\C^*$-action naturally lifts to a $\C^*$-action on the line bundle $p^*\cO_Y(1)$.
Let $\lambda$ be the equivariant parameter. As mentioned in the previous section, orbifold quantum Lefschetz already implies a result stronger than the mirror theorem.
\begin{theorem}\label{thm:main}
Let $\langle\ldots\rangle^{Y_{X_\infty,r},p^*\cO_Y(1)}$ denote Gromov--Witten invariant twisted by equivariant line bundle $p^*\cO_Y(1)$. Given cohomological classes $\gamma_j\in H_{\on{CR}}^*(Y_{X_\infty,r})$ and nonnegative integers $a_j$, for $1\leq j\leq n$, we have
\[
\left\langle \prod_{j=1}^n \tau_{a_j}(i^*\gamma_j) \right\rangle^{X_{D,r}}_{0,n,\dd} = \left[\left\langle
 \prod_{j=1}^n \tau_{a_j}(\gamma_j) \right\rangle^{Y_{X_\infty,r},p^*\cO_Y(1)}_{0,n,i_*\dd}\right]_{\lambda=0},
\]
where $i:X_{D,r}\hookrightarrow Y_{X_\infty,r}$ is the embedding.
\end{theorem}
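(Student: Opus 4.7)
The plan is to recognize this statement as the invariant-level incarnation of the orbifold quantum Lefschetz principle applied to the hypersurface construction of Section \ref{sec:hyp}, and then to rewrite the ordinary Euler class as the $\lambda\to 0$ specialization of the equivariant one. First I would record the geometric input: by the construction of Section \ref{sec:hyp}, $X_{D,r}$ is the vanishing locus of the section $p^*\tilde{f}$ of $p^*\cO_Y(1)$ on $Y_{X_\infty,r}$; this section is regular because $\tilde{f}^{-1}(0)=\sigma(X)\cong X$ is a section of the $\bbP^1$-bundle $Y\to X$ meeting $X_\infty$ transversally along $D$, and the root construction preserves this codimension-one behavior. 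By Lemma \ref{lem:convex}, $p^*\cO_Y(1)$ is convex, hence $R^1\pi_*\on{ev}^*(p^*\cO_Y(1))=0$ and the $K$-class $(p^*\cO_Y(1))_{0,n,i_*\dd}$ is represented by an honest vector bundle on $\bM_{0,n}(Y_{X_\infty,r},i_*\dd)$.

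The central identity to establish is the virtual class comparison
\[
\tilde{i}_*[\bM_{0,n}(X_{D,r},\dd)]^w \;=\; e\bigl((p^*\cO_Y(1))_{0,n,i_*\dd}\bigr)\cap [\bM_{0,n}(Y_{X_\infty,r},i_*\dd)]^w,
\]
where $\tilde{i}:\bM_{0,n}(X_{D,r},\dd)\hookrightarrow\bM_{0,n}(Y_{X_\infty,r},i_*\dd)$ is the embedding induced by $i$ and $e$ is the ordinary (non-equivariant) Euler class. This is the step I expect to be the main obstacle; it is the heart of orbifold quantum Lefschetz as developed by Tseng and by Coates--Corti--Iritani--Tseng under the convexity hypothesis. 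The geometric idea is that $p^*\tilde f$ induces a section of $(p^*\cO_Y(1))_{0,n,i_*\dd}$ whose schematic zero locus realizes $\bM_{0,n}(X_{D,r},\dd)$ together with its weighted virtual class, so a standard excess-intersection (virtual pullback) argument yields the displayed formula.

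Granted the virtual class comparison, I would combine it with the usual compatibilities across $\tilde{i}$: the inertia-stack embedding satisfies $Ii\circ\on{ev}_j^{X_{D,r}}=\on{ev}_j^{Y_{X_\infty,r}}\circ\tilde{i}$, and the descendant classes $\bar\psi_j$ pull back along $\tilde{i}$ because they descend from the forgetful map to $\bM_{0,n}$. The projection formula then gives
\[
\left\langle \prod_{j=1}^n \tau_{a_j}(i^*\gamma_j)\right\rangle^{X_{D,r}}_{0,n,\dd}= \int_{[\bM_{0,n}(Y_{X_\infty,r},i_*\dd)]^w} e\bigl((p^*\cO_Y(1))_{0,n,i_*\dd}\bigr)\prod_{j=1}^n \on{ev}_j^*\gamma_j\cdot \bar\psi_j^{a_j}.
\]
Finally, because the forgetful map $H^*_{\C^*}(-)\to H^*(-)$ commutes with proper pushforward, the right-hand side above is exactly the $\lambda=0$ specialization of the equivariant integral defining $\langle\prod_j \tau_{a_j}(\gamma_j)\rangle^{Y_{X_\infty,r},p^*\cO_Y(1)}_{0,n,i_*\dd}$, completing the proof.
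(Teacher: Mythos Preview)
Your proposal is correct and follows the same route as the paper: the theorem is simply the invariant-level form of orbifold quantum Lefschetz (\cite{Tseng}, \cite{CCIT09}) for the convex line bundle $p^*\cO_Y(1)$ on $Y_{X_\infty,r}$, combined with the observation that the non-equivariant Euler class is the $\lambda=0$ specialization of the equivariant one. The paper states this in a single sentence (``orbifold quantum Lefschetz already implies a result stronger than the mirror theorem''), whereas you have spelled out the virtual class comparison, the compatibility of evaluation maps and $\bar\psi$-classes under $\tilde i$, and the projection formula; these are exactly the ingredients implicit in the citation.
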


The theorem above already allows one to compute invariants of $X_{D,r}$ only using invariants of $X$. But we will do more in this section. Let us compute the right-hand side by localization. In the localization computation of invariants of $Y_{X_\infty,r}$, the fixed loci can be labelled by
decorated bipartite graphs. In particular, we have the graph $\Gamma_0$ consists of a
single vertex supported on $p^{-1}(X_\infty)$ without edges. This graph contributes
\[
\text{Cont}_{\Gamma_0}=\displaystyle\int_{[\bM_{0,n}(\fX_r,\dd)]^{\text{vir}}}
\prod\limits_{j=1}^n \on{ev}_j^*\gamma_j \frac{e_{\C^*}(\cO(D)_{0,n,\dd})}{e_{\C^*}(\cO(D/r)_{0,n,\dd})},
\]
where $\fX_r$ is the gerbe of $r$-th roots of $\cO(D)$ over $X$,
and $\cO(D)_{0,n,\dd}$ is given by pulling back $\cO(D)$ to the universal curve
and pushforward to the moduli space. $\cO(D/r)$ is the universal sheaf on the root gerbe, and $\cO(D/r)_{0,n,\dd}$ is the pullback and pushforward similar as before.

We claim that any other graph $\Gamma$ containing an edge always contributes $0$. The reason
comes from the effect of the twisting $e_{\C^*}(\left(p^*\cO_Y(1) \right)_{0,n,\dd})$ on an
edge. Schematically, the localization contribution of $\Gamma$ can be written as follows.
\[
\text{Cont}_{\Gamma_0}=\displaystyle\int_{[\bM_\Gamma]^{\text{vir}}} \prod\limits_{v\in V(\Gamma)}\text{Cont}_v \prod\limits_{e\in E(\Gamma)}\text{Cont}_e,
\]
where $\bM_{\Gamma}$ is the fixed locus in the moduli space, $V(\Gamma), E(\Gamma)$ are sets of vertices and edges, respectively. Writing a localization residue as vertex and edge contributions is standard, and a detailed expression in this situation can be written out by an easy modification of the computation in \cite{JTY}. But we do not need the full expression in our analysis.

Suppose an edge $e\in E(\Gamma)$ has multiplicity $k$. The corresponding edge contribution includes a factor coming from the twisting $e_{\C^*}(\left(p^*\cO_Y(1) \right)_{0,n,i_*\dd})$. More precisely, it has the form
\begin{equation}\label{eqn:edge}
\text{Cont}_e=\ev^*_e\prod\limits_{j=0}^k\left(\dfrac{k-j}{k}(D+\lambda)\right ) \text{Cont}'_e,
\end{equation}
where the factor $\prod\limits_{j=0}^k\left(\dfrac{k-j}{k}(D+\lambda)\right )$ comes from the twisting, and $\text{Cont}'_e$ refers to other factors (e.g., contribution of $T_{Y_{X_\infty,r}}$) whose expressions do not concern us. 
Note that when $j=k$,
$\text{Cont}_e=0$. We could almost conclude our claim. One could be a little more careful by checking the gluing of nodes at the $X_\infty$ side in order to make sure that this $0$ factor is not cancelled in the normalization sequence. It is straightforward and the conclusion is that $\text{Cont}_e=0$ whenever there is at least one edge.

As a result, we have the following statement.
\begin{thm}\label{thm:tw}
Given cohomological classes $\gamma_j\in H_{\on{CR}}^*(Y_{X_\infty,r})$ and nonnegative integers $a_j$, for $1\leq j\leq n$, we have
\[
\langle \prod_{j=1}^n \tau_{a_j}(i^*\gamma_j) \rangle^{X_{D,r}}_{0,n,\dd} = \left[
\displaystyle\int_{\bM_{0,n}(\fX_r,\dd)}
\prod\limits_{j=1}^n \bar{\psi}_j\on{ev}_j^* (i^{\prime*}\gamma_j) \frac{e_{\C^*}(\cO(D)_{0,n,\dd})}{e_{\C^*}(\cO(D/r)_{0,n,\dd})}
\right]_{\lambda=0},
\]
where $i:X_{D,r}\hookrightarrow Y_{X_\infty,r}$ and $i^\prime:\fX_r\hookrightarrow Y_{X_\infty,r}$ are the embeddings.
\end{thm}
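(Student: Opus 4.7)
The plan is to start from Theorem~\ref{thm:main}, which (via orbifold quantum Lefschetz for the convex line bundle $p^*\cO_Y(1)$) already identifies $\langle\prod_j\tau_{a_j}(i^*\gamma_j)\rangle^{X_{D,r}}_{0,n,\dd}$ with the non-equivariant limit of the corresponding $p^*\cO_Y(1)$-twisted invariant on $Y_{X_\infty,r}$. To upgrade this identification to an integral on the root gerbe $\fX_r$, I would apply virtual $\C^*$-localization (the orbifold Graber--Pandharipande formula) to $\bM_{0,n}(Y_{X_\infty,r},i_*\dd)$ using the $\C^*$-action fixed just before the statement (weight $-1$ on $L$, trivial on $\cO$, hence weight $+1$ on the normal bundle $L^*$ of $X_\infty$). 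The $\C^*$-fixed locus on $Y_{X_\infty,r}$ is $X_0\sqcup \fX_r$, where $\fX_r=p^{-1}(X_\infty)$ is the $r$-th root gerbe of $\cO(D)$ over $X$.

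Fixed components in the moduli space are indexed by decorated bipartite graphs whose vertices lie over $X_0$ or $\fX_r$ and whose edges correspond to orbifold multiple covers of the $\bbP^1_{1,r}$-fibers. The distinguished graph $\Gamma_0$ consisting of a single vertex over $\fX_r$ with no edges contributes exactly
\[
\text{Cont}_{\Gamma_0}=\int_{[\bM_{0,n}(\fX_r,\dd)]^{\text{vir}}} \prod_{j=1}^n \bar{\psi}_j^{a_j}\,\on{ev}_j^*(i^{\prime*}\gamma_j)\,\frac{e_{\C^*}(\cO(D)_{0,n,\dd})}{e_{\C^*}(\cO(D/r)_{0,n,\dd})},
\]
because $i^{\prime*}p^*\cO_Y(1)\cong\cO(D)$ with equivariant weight $1$ restricts the twisting class to the numerator, while the virtual normal bundle (the universal $r$-th root $\cO(D/r)$, equivariant weight $1/r$) contributes the denominator. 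The theorem will therefore follow once every graph $\Gamma$ containing at least one edge is shown to contribute zero.

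For this vanishing I would use the edge factor produced by the twisting. A standard localization analysis (a routine modification of the one in \cite{JTY} incorporating the twisting by $p^*\cO_Y(1)$) shows that an edge $e$ of multiplicity $k$ contributes, as one factor of $\text{Cont}_e$, the expression $\on{ev}_e^*\prod_{j=0}^{k}\bigl(\tfrac{k-j}{k}(D+\lambda)\bigr)$, whose $j=k$ term is identically zero. Hence $\text{Cont}_e=0$, so $\text{Cont}_\Gamma=0$ as soon as $\Gamma$ has any edge. Summing over graphs and setting $\lambda=0$ leaves only $\text{Cont}_{\Gamma_0}$, which is the claimed formula.

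The main obstacle I anticipate is verifying that the zero produced by the edge factor is not cancelled by a pole arising elsewhere in the vertex/edge/node decomposition --- concretely, from the smoothing-the-node contribution, from the $\mu_r$-gerbe automorphisms at the orbifold node over $X_\infty$, or from the $\fX_r$-vertex factor attached to the edge. Writing out the normalization exact sequence at each such node, one checks that the remaining factors of $\text{Cont}_e$ and of the adjacent vertex contribution are regular at $D+\lambda=0$, so no cancellation occurs; with this bookkeeping in place, the sum collapses to $\text{Cont}_{\Gamma_0}$ and combining with Theorem~\ref{thm:main} finishes the proof.
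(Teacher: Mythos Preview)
Your proposal is correct and follows essentially the same route as the paper: start from Theorem~\ref{thm:main}, localize the twisted invariant on $Y_{X_\infty,r}$ with respect to the fiberwise $\C^*$-action, identify the single-vertex graph over $\fX_r$ as the claimed integral, and kill every graph with an edge via the $j=k$ factor in $\prod_{j=0}^{k}\tfrac{k-j}{k}(D+\lambda)$ coming from the twisting. The paper likewise flags the need to check that this zero is not cancelled by the normalization sequence at the node over $X_\infty$, and dismisses it as straightforward; your anticipation of this point matches theirs.
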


In particular, this result covers Theorem \ref{main-theorem} as a special case, resulting in a second proof of Theorem \ref{main-theorem}. First, recall that $\underline{I}\fX_r=\sqcup_{j=0}^{r-1}X$. Let $\iota_j:X\rightarrow \underline{I}\fX_r$ be the isomorphism of $X$ with the component of age $j/r$. By \cite{AJT}, the small $J$-function of $X$ and the one of $\fX_r$ has a simple relation.
\begin{lemma}
$J_{\fX_r}(t,z)=r^{-1} \sum\limits_{j=0}^{r-1} (\iota_j)_*J_X(t,z).$
\end{lemma}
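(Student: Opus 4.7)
The plan is to compare the small $J$-function of $\fX_r$ with that of $X$ by reducing to a direct comparison of one-point descendant invariants, using the structure of $\fX_r\to X$ as a banded $\mu_r$-gerbe. Since $J_{\mathcal X}(t,z)$ is defined by the series
\[
z+t+\sum_{n,d,\alpha}\frac{Q^d}{n!}\left\langle\frac{\phi_\alpha}{z-\bar\psi},t,\ldots,t\right\rangle^{\mathcal X}_{0,n+1,d}\phi^\alpha,
\]
and since the coarse inertia decomposition $\underline{I}\fX_r=\sqcup_{j=0}^{r-1}X$ yields a natural basis $\{(\iota_j)_*\phi_a\}$ of $H^*_{\on{CR}}(\fX_r)$ from any basis $\{\phi_a\}$ of $H^*(X)$, the identity reduces to the per-invariant statement
\[
\left\langle\prod_{i=1}^n\tau_{a_i}((\iota_{j_i})_*\phi_i)\right\rangle^{\fX_r}_{0,n,d}=\frac{1}{r}\left\langle\prod_{i=1}^n\tau_{a_i}(\phi_i)\right\rangle^{X}_{0,n,d}
\]
for every degree $d$ and banding vector $(j_1,\ldots,j_n)$, with both sides automatically zero unless the bandings are compatible with $d$.

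I would establish this identity geometrically as follows. An orbifold stable map to the banded $\mu_r$-gerbe $\fX_r$ is the data of a stable map $f\colon C\to X$ together with an $r$-th root of $f^*L$ on $C$ compatible with prescribed bandings at the marked points. The forgetful morphism $\bM_{0,n}(\fX_r,d)\to\bM_{0,n}(X,d)$, restricted to the open-and-closed substack of fixed banding vector, realizes the source as essentially a $\mu_r$-gerbe over the target, and the generic $\mu_r$-stabilizer of this gerbe produces the $1/r$ factor on the weighted virtual fundamental class. The evaluation maps and $\bar\psi$-classes are pulled back from $\bM_{0,n}(X,d)$, so descendant insertions match up. This is essentially the comparison for descendant Gromov--Witten invariants of banded gerbes carried out in \cite{AJT}.

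The hard part is the careful bookkeeping of factors of $r$ coming from both the weighted virtual class on $\bM_{0,n}(\fX_r,d)$ and the Chen--Ruan Poincar\'e pairing on $\fX_r$, which differs from the ordinary pairing on each copy of $X$ by the order of the gerbe band; one must ensure these factors combine to produce exactly $1/r$ per invariant. Once the per-invariant identity is established, summing over $j$ and substituting into the defining series for $J_{\fX_r}$ telescopes $\sum_j(\iota_j)_*$ against the $1/r$ prefactor to produce $r^{-1}\sum_{j=0}^{r-1}(\iota_j)_*J_X(t,z)$, completing the proof.
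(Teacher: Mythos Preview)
Your approach---reducing to a per-invariant comparison via the banded-gerbe structure of $\fX_r\to X$ and invoking \cite{AJT}---is exactly what the paper does; the paper offers no proof beyond citing \cite{AJT}, so your outline is already more detailed than the original. The per-invariant identity you state is correct and is essentially the content of \cite{AJT}.

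The gap is in your final step. You correctly observe that the identity holds only when the banding vector $(j_1,\dots,j_n)$ is compatible with $d$, but with $t$ in the untwisted sector (which is how the lemma is used downstream) this compatibility forces the distinguished marking to lie in the \emph{unique} sector $j\equiv D\cdot d\pmod r$ for each fixed degree $d$. After the $1/r$ from your per-invariant identity cancels the factor $r$ coming from the Chen--Ruan dual basis, the degree-$d$ piece of $J_{\fX_r}(t,z)$ therefore lands in the single sector of age $\langle -D_r\cdot d\rangle$; it is not $r^{-1}\sum_j(\iota_j)_*J_{X,d}(t,z)$. (Check the case $X=\mathrm{pt}$, $\fX_r=B\mu_r$: with untwisted $t$ one finds $J_{B\mu_r}(t,z)=ze^{t/z}\mathbf 1_0$, not $r^{-1}ze^{t/z}\sum_j\mathbf 1_j$.) So the ``telescoping $\sum_j(\iota_j)_*$ against the $1/r$ prefactor'' that you describe does not occur. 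What your argument actually yields is the per-degree statement placing $J_{\fX_r,d}(t,z)$ in the sector of age $\langle -D_r\cdot d\rangle$, and that is precisely what the paper uses in the subsequent twisted-$I$-function computation to produce the factor $\mathbf 1_{\langle -D_r\cdot d\rangle}$; the lemma as literally written should be read as a loose paraphrase of this relation.
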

Write $J_{\fX_r}(t,z)= \sum\limits_{\dd} J_{\fX_r, \dd}(t,z)Q^{\dd}$. We can then construct $I$-function for the required twisted theory by attaching hypergeometric factors (\cite{Tseng,CCIT09}). By a slight abuse of notation, we use Novikov variable $Q^{\dd}$ for curve classes $\dd\in \NE(X_{D,r})$.
\begin{prop}
\begin{equation*}
\begin{split}    
I_{\fX_r}^{\text{tw}}(Q,t,z) = \sum\limits_{\dd}\sum\limits_\alpha J_{\fX_r,\dd}(t,z)Q^{\dd}&\left(\frac{\prod_{a\leq D\cdot \dd}(D+\lambda+az)}{\prod_{a \leq 0}(D+\lambda+az)}\right)\times\\
&\left(\frac{\prod_{\langle a\rangle=\langle (D\cdot \dd)/r\rangle,a\leq 0}(\frac{D+\lambda}{r}+az)}{\prod_{\langle a\rangle=\langle (D\cdot \dd)/r\rangle,a\leq (D\cdot \dd)/r}(\frac{D+\lambda}{r}+az)}\right), 
\end{split}
\end{equation*}
where superscript $\text{tw}$ means the (double-) twisted theory by $\cO(D)$ (using the characteristic class $e_{\C^*}(\cdot)$) and $\cO(D/r)$ (using $1/e_{\C^*}(\cdot)$).
\end{prop}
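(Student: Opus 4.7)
The plan is to derive this formula by applying the orbifold quantum Lefschetz theorem of \cite{Tseng} (see also \cite{CCIT09}) directly to the $J$-function of $\fX_r$ supplied by the preceding lemma. Since $\fX_r$ is a $\mu_r$-gerbe over $X$, that lemma expresses $J_{\fX_r}(t,z)=r^{-1}\sum_{j=0}^{r-1}(\iota_j)_*J_X(t,z)$, so each degree-$\dd$ summand $J_{\fX_r,\dd}$ is a push-forward of $J_{X,\dd}$ distributed over the twisted sectors. The twisted $I$-function in the proposition is then obtained by multiplying $J_{\fX_r,\dd}$ by two independent hypergeometric modifications, one for each twisting, and summing over $\dd$.

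For the $(e_{\C^*},\cO(D))$-twisting, the line bundle $\cO(D)$ is pulled back from $X$ and is convex there, since $D$ is nef on the smooth projective variety $X$, so its pull-back has non-negative degree on every genus zero map; the pull-back to $\fX_r$ is therefore still convex, and it has trivial age on every twisted sector because it comes from the coarse moduli. Orbifold quantum Lefschetz then attaches the factor
\[
\frac{\prod_{a\leq D\cdot \dd}(D+\lambda+az)}{\prod_{a\leq 0}(D+\lambda+az)}
\]
to the $Q^{\dd}$ coefficient, where $\lambda$ is the equivariant parameter of the fiberwise weight-$1$ $\C^*$-action on $\cO(D)$. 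For the $(1/e_{\C^*},\cO(D/r))$-twisting, convexity is not required and one may invoke the inverse-Euler-class version of the orbifold quantum Lefschetz formalism. The equivariant first Chern class of $\cO(D/r)$ is $(D+\lambda)/r$, and because $\cO(D/r)^{\otimes r}\cong \cO(D)$ has trivial age, the monodromy of $\cO(D/r)$ on the sector of age $\langle (D\cdot\dd)/r\rangle$ is exactly $\langle (D\cdot\dd)/r\rangle$. The corresponding age-selected hypergeometric factor is
\[
\frac{\prod_{\langle a\rangle=\langle (D\cdot \dd)/r\rangle,\,a\leq 0}\bigl(\tfrac{D+\lambda}{r}+az\bigr)}{\prod_{\langle a\rangle=\langle (D\cdot \dd)/r\rangle,\,a\leq (D\cdot \dd)/r}\bigl(\tfrac{D+\lambda}{r}+az\bigr)},
\]
which matches the second factor in the proposition.

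The main technical point is the legitimacy of iterating the two twistings (one with $e_{\C^*}$ and one with $1/e_{\C^*}$) and the fact that the two hypergeometric factors simply multiply. This is guaranteed by the multiplicativity of Givental's formalism for twisted theories in the characteristic class (Coates--Givental, orbifold extension in \cite{Tseng}), so once each individual factor is identified the remainder of the proof is a direct assembly. The most delicate book-keeping is the age constraint $\langle a\rangle=\langle (D\cdot \dd)/r\rangle$ on the $\cO(D/r)$ factor, which must be read off from the monodromy of $\cO(D/r)$ along each twisted sector of $\fX_r$; this is the only step I would expect to require careful verification, and it follows from matching the representation of $\mu_r$ on $\cO(D/r)$ with the sector label in $\underline{I}\fX_r=\sqcup_{j=0}^{r-1}X$.
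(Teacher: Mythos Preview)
Your approach is correct and is exactly what the paper intends: the proposition is stated there without proof, justified only by the parenthetical citation ``(\cite{Tseng,CCIT09})'', and your proposal simply unpacks that citation by attaching the two hypergeometric modification factors from the orbifold twisted-theory formalism to $J_{\fX_r,\dd}$. The multiplicativity of the two twists and the identification of the age of $\cO(D/r)$ on the relevant sector are the right points to check, and both follow from the references as you indicate.
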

Combining all the above and taking nonequivariant limit, we obtain the following equation.



\begin{align}
I_{X_{D,r}}(Q,t,z)=\sum_{\dd\in \on{NE}(X)}&J_{X, \dd}(t,z)Q^{\dd}\left(\frac{\prod_{a\leq D\cdot \dd}(D+az)}{\prod_{a \leq 0}(D+az)}\right)\\
\notag & \left(\frac{\prod_{\langle a\rangle=\langle D_r\cdot \dd\rangle,a\leq 0}(D_r+az)}{\prod_{\langle a\rangle=\langle D_r\cdot \dd\rangle,a\leq D_r\cdot \dd}(D_r+az)}\right)\mathbf 1_{\langle -D_r\cdot \dd\rangle}.
\end{align}
In view of $D$ being a nef divisor, this becomes exactly the same equation as the one in Theorem \ref{main-theorem}.

\begin{remark}
Comparing Theorem \ref{thm:tw} with Theroem \ref{thm:main}, an advantage is that it can be made more explicit via Grothendieck-Riemann-Roch computation. In fact, the Hurwitz--Hodge classes $e(\cO(D/r)_{0,n, \dd})$ can be further related to double ramification cycles with target varieties. Further study is in progress.
\end{remark}
\begin{remark}
We would like to make a note about the choice of $\C^*$-weights on the bundles $\cO(D)$ and $\cO(D/r)$. In Theorem \ref{thm:main}, the $\C^*$-action on $p^*\cO_Y(1)$ is in fact arbitrary since it is nef and we only care about the non-equivariant limit. However, there is only one choice that allows us to further achieve Theorem \ref{thm:tw}. And the choice is the equivariant $\cO_Y(1)$ of $Y=\bbP(L\oplus\cO)$ with weights $-1,0$ on factors $L,\cO$. Otherwise, \eqref{eqn:edge} would look different and we would have more complicated graph sums. In the end, this forces the $\C^*$-weight on $\cO(D)$ to be $1$.
\end{remark}

\subsection{The $S$-extended $I$-function}

One can also consider the S-extended $I$-function for toric stack bundles to construct the S-extended $I$-function for root stacks. Then Theorem \ref{main-theorem} can also be stated in terms of the S-extended $I$-function for root stacks without change. We also refer to \cite{CCIT} and \cite{CCFK} for the S-extended I-function for toric stacks. As mentioned in \cite{Iritani}, \cite{CCIT}, \cite{JTY}, the non-extended $I$-function only determines the restriction of the $J$-function to the small parameter space $H^2(X_{D,r},\mathbb C)\subset H^{2}_{\on{CR}}(X_{D,r},\mathbb C)$. Taking the S-extended $I$-function allows one to determine the $J$-function along twisted sectors.

Recall that $Y_{X_\infty,r}$ is a toric stack bundle over $X$ and the fiber is the weighted projective line $\mathbb P^1_{1,r}$. As a toric Deligne--Mumford stack, the weighted projective line $\mathbb P^1_{1,r}$ can be constructed using stacky fans defined in \cite{BCS}. The fan sequence is
\[
0\longrightarrow\mathbb Z\xrightarrow{\left(\begin{matrix}r\\1\end{matrix}\right)}(\mathbb Z)^2\xrightarrow{(\begin{matrix}-1&r\end{matrix})}\mathbb Z.
\]

Following \cite{Jiang}, one can also consider the $S$-extended stacky fan. In general, we choose $S$ to be a subset of the so-called box elements of the toric stack bundle. For $\mathbb P^1_{1,r}$, it simply means we assume
\[
S:=\{a_1,a_2,\ldots,a_m\}\subset \{0,1,\ldots, r-1\}.
\]
The $S$-extended fan sequence is
\[
0\longrightarrow\mathbb Z^{1+m}\xrightarrow{\left(\begin{matrix}r& 0&\cdots &0\\1&-a_1&\cdots&-a_m\\0&r&\cdots&0\\
\vdots&\vdots&\cdots&\vdots\\0&0&\cdots&r\end{matrix}\right)}(\mathbb Z)^{2+m}\xrightarrow{(\begin{matrix}-1&r&a_1&\cdots&a_m\end{matrix})}\mathbb Z.
\]

By \cite{JTY}, the $S$-extended $I$-function is
\begin{align}\label{Extended-I-function}
&I_{X_{D,r}}^{S}(Q,x,t,z)
=\sum_{\dd\in \on{NE}(X)}\sum_{(k_1,\ldots,k_m)\in (\mathbb Z_{\geq 0})^m}J_{X, \dd}(t,z)Q^{\dd}\frac{\prod_{i=1}^m x_i^{k_i}}{z^{\sum_{i=1}^m k_i}\prod_{i=1}^m(k_i!)}\times\\
&\quad \left(\prod_{0<a\leq D\cdot \dd}(D+az)\right)
\notag  \left(\frac{\prod_{\langle a\rangle=\langle D_r\cdot \dd-\frac{\sum_{i=1}^mk_ia_i}{r}\rangle,a\leq 0}(D_r+az)}{\prod_{\langle a\rangle=\langle D_r\cdot \dd-\frac{\sum_{i=1}^mk_ia_i}{r}\rangle,a\leq D_r\cdot \dd-\frac{\sum_{i=1}^mk_ia_i}{r}}(D_r+az)}\right)\mathbf 1_{\langle -D_r\cdot \dd+\frac{\sum_{i=1}^mk_ia_i}{r}\rangle},
\end{align}
where $x=\{x_1,\ldots,x_m\}$ is the set of variables corresponding to the extended data $S$, and $D_r=D/r$ as divisor classes.

The following mirror theorem again follows from the mirror theorem for toric stack bundles \cite{JTY} and orbifold quantum Lefschetz \cite{Tseng}, \cite{CCIT09}.
\begin{theorem}\label{thm-orb-extended}
The $S$-extended $I$-function (\ref{Extended-I-function}) lies in the Givental's Lagrangian cone $\mathcal L_{X_{D,r}}$ of the root stack $X_{D,r}$. \end{theorem}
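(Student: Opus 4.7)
The plan is to adapt the proof of Theorem \ref{main-theorem} to the $S$-extended setting, following the same three-step pattern: apply an extended mirror theorem upstairs, apply quantum Lefschetz, then restrict the spurious Novikov variable.

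First, I would invoke the $S$-extended mirror theorem for toric stack bundles from \cite{JTY}, applied to the $\bbP^1_{1,r}$-bundle $Y_{X_\infty,r}\to X$ equipped with the $S$-extended stacky fan displayed just before the statement. This produces an $S$-extended $I$-function for $Y_{X_\infty,r}$ as an explicit hypergeometric modification of $J_X$, indexed by the fiber degree $n\in \Z_{\geq 0}$ together with the auxiliary parameters $(k_1,\dots,k_m)\in \Z_{\geq 0}^m$ corresponding to the box elements $a_i$. The extension contributes the factor $\prod_i x_i^{k_i}/(z^{\sum k_i}\prod_i k_i!)$, a shift in the bounds of the fiber hypergeometric products, and an age shift $\sum k_i a_i/r$ in the twisted sector index.

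Next, since $p^*\cO_Y(1)$ is convex by Lemma \ref{lem:convex} and cuts out $X_{D,r}$ as $p^*\tilde f^{-1}(0)$, I would apply orbifold quantum Lefschetz \cite{Tseng, CCIT09} to obtain an explicit slice of Givental's cone for $X_{D,r}$, obtained by pulling back along $i:X_{D,r}\hookrightarrow Y_{X_\infty,r}$. As in Section \ref{sec:hyp}, the result still carries the spurious Novikov variable $q$ inherited from the fiber direction of $Y\to X$. To eliminate it, I would invoke the analog of Lemma \ref{lem:curve_class}: for any curve class $\beta\in \NE(X_{D,r})\subset \NE(Y_{X_\infty,r})$ the fiber degree satisfies $n=\int_\beta c_1(\cO_Y(1))=D\cdot \dd$. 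Crucially, the extension indices $k_i$ are not associated to curve classes but merely parameterize directions in the Chen--Ruan cohomology of twisted sectors, so they remain free under this restriction. Setting $n=D\cdot \dd$ and absorbing $q^{D\cdot \dd}$ into $Q^{\dd}$ then collapses the formula to \eqref{Extended-I-function}, and the same boundary-of-subcone observation used after Lemma \ref{lem:curve_class} shows that this restriction commutes with Birkhoff factorization and the mirror map, so the resulting family indeed lies in $\mathcal L_{X_{D,r}}$.

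The main point requiring care is bookkeeping in the middle step: verifying that the age shifts $\sum k_i a_i/r$ and the modified hypergeometric bounds in the fiber direction combine correctly after the restriction $n=D\cdot \dd$ to reproduce exactly the numerator $\prod_{\langle a\rangle=\langle D_r\cdot \dd-\sum k_i a_i/r\rangle,\,a\leq 0}(D_r+az)$ and its denominator counterpart. This is a direct combinatorial translation of the $S$-extended fan sequence, and introduces no new techniques beyond those already used for Theorem \ref{main-theorem}.
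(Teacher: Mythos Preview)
Your proposal is correct and follows exactly the approach the paper takes: the paper's proof is simply the one-line remark that the theorem ``again follows from the mirror theorem for toric stack bundles \cite{JTY} and orbifold quantum Lefschetz \cite{Tseng}, \cite{CCIT09},'' i.e., precisely the three-step pattern (extended mirror theorem upstairs, quantum Lefschetz via Lemma~\ref{lem:convex}, then the Novikov restriction of Lemma~\ref{lem:curve_class}) that you spell out. Your added detail about the $k_i$ being free parameters unaffected by the curve-class restriction and the bookkeeping of age shifts is a correct elaboration of what the paper leaves implicit.
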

Theorem \ref{thm-orb-extended} allows one to compute orbifold Gromov--Witten invariants of root stacks when there are orbifold marked points with ages $a_i/r$, for $1\leq i\leq m$.

\subsection{Examples}
\begin{example}[Toric pairs]

We consider a toric pair $(X,D)$, where $X$ is a toric variety and $D$ is a toric divisor. The root stack $X_{D,r}$ is a toric Deligne--Mumford stack, where $r$ is a positive integer. The $I$-function for $X$ is
\[
I_X(Q,z)=ze^{\sum_{i=1}^l p_i\log Q_i/z}\sum_{\dd\in\on{NE}(X)}Q^{\dd}\prod_{i=1}^n\left(\frac{\prod_{a\leq 0}(D_i+az)}{\prod_{a \leq D_i\cdot \dd}(D_i+az)}\right),
\]
where $D_1,\ldots, D_n$ are toric divisors of $X$; $\{p_1,\ldots,p_l\}$ is a basis of $H^2(X,\mathbb Q)$. We can also write down the $I$-function for $X_{D,r}$:
\[
I_{X_{D,r}}(Q,z)= ze^{\sum_{i=1}^l p_i\log Q_i/z}\sum_{\dd\in\on{NE}(X)}Q^{\dd}\prod_{i=1}^n\left(\frac{\prod_{\langle a\rangle =\langle \mathcal D_i\cdot \dd\rangle, a\leq 0}(\mathcal D_i+az)}{\prod_{\langle a\rangle =\langle \mathcal D_i\cdot \dd\rangle,, a \leq \mathcal D_i\cdot \dd}(\mathcal D_i+az)}\right)\textbf{1}_{\langle-D_r\cdot \dd\rangle},
\]
where $\mathcal D_i$ are toric divisors of $X_{D,r}$.

Therefore,
\[
I_{X_{D,r}}(Q,z)
=\pi^*\sum_{\dd\in\on{NE}(X)} I_{X, \dd}(Q,z)\left(\frac{\prod_{a\leq D\cdot \dd}(D+az)}{\prod_{a \leq 0}(D+az)}\right)
\left(\frac{\prod_{\langle a\rangle =\langle D_r\cdot \dd\rangle, a\leq 0}(D_r+az)}{\prod_{\langle a\rangle =\langle D_r\cdot \dd\rangle,, a \leq D_r\cdot \dd}(D_r+az)}\right)\textbf{1}_{\langle-D_r\cdot \dd\rangle},
\]

where $\pi:X_{D,r}\rightarrow X$, and $D_r=D/r$ as divisor classes. This matches with the mirror formula in Theorem \ref{main-theorem}. Note that for toric pairs, $D$ is not required to be nef.
\end{example}

\begin{example}\label{example-root-plane-cubic}
Let $C$ be a smooth cubic curve in $\bbP^2$. The $I$-function for the root stack $\bbP^2_{C,r}$ is
\begin{align}
I_{\bbP^2_{C,r}}(Q,z)=ze^{H\log Q /z}\sum_{d\geq 0} Q^{d} \frac{\prod_{a=1}^{3d}(3H+az)}{\prod_{a=1}^d (H+az)^3}\frac{\prod_{\langle a \rangle=\langle 3d/r\rangle, a \leq 0}(3H/r+az)}{\prod_{\langle a \rangle=\langle 3d/r\rangle, a \leq 3d/r}(3H/r+az)}\textbf{1}_{\langle -3d/r\rangle},
\end{align}
where $H\in H^2(\bbP^2)$ is the hyperplane class.
\end{example}

\begin{example}\label{example-root-P3-cubic}
Let $S$ be a smooth cubic surface in $\bbP^3$. The $I$-function for the root stack $\bbP^3_{S,r}$ is
\begin{align}
I_{\bbP^3_{S,r}}(Q,z)=ze^{H\log Q /z}\sum_{d\geq 0} Q^{d} \frac{\prod_{a=1}^{3d}(3H+az)}{\prod_{a=1}^d (H+az)^4}\frac{\prod_{\langle a \rangle=\langle 3d/r\rangle, a \leq 0}(3H/r+az)}{\prod_{\langle a \rangle=\langle 3d/r\rangle, a \leq 3d/r}(3H/r+az)}\textbf{1}_{\langle -3d/r\rangle}.
\end{align}
More generally, one can also write down the $I$-function for the root stack $\bbP^n_{D,r}$, where $D$ is a smooth hypersurface of $\bbP^n$, in a similar way.
\end{example}

\section{A mirror theorem for relative pairs}\label{rel-mirror}

Genus-zero invariants of root stacks and genus-zero relative invariants are closely related. Following the formalism of \cite{FWY}, we rephrase our result as a mirror theorem of relative theory in this section.

Note that in \cite{FWY}, relative invariants with negative contact orders are defined, and it is proven to coincide with invariants of root stacks with some ``large"-age markings. In general, Theorem \ref{thm:main} should involve such invariants. In Section \ref{sec:small-rel-I}, we consider a non-extended $I$-function, and these invariants are not involved in this case. Hence, we state a mirror theorem for relative invariants without Givental formalism for relative invariants developed in \cite{FWY}. In Section \ref{sec:extended-rel-I}, we state the mirror theorem using Givental formalism for relative invariants and $S$-extended $I$-function to determine relative invariants with more than one relative markings, as well as invariants with negative markings.

\subsection{Genus-zero formalism of relative theory}
In this subsection, we briefly recall some notation in \cite{FWY}.

Define $\HH_0=H^*(X)$ and $\HH_i=H^*(D)$ if $i\in \Z - \{0\}$.
Let
\[
\HH=\bigoplus\limits_{i\in\Z}\HH_i.
\]
Each $\HH_i$ naturally embeds into $\HH$. For an element $\gamma\in \HH_i$, we denote its image in $\HH$ by $[\gamma]_i$. Define a pairing on $\HH$ by the following.
\begin{equation}\label{eqn:pairing}
\begin{split}
([\gamma]_i,[\delta]_j) = 
\begin{cases}
0, &\text{if } i+j\neq 0,\\
\int_X \gamma\cup\delta, &\text{if } i=j=0, \\
\int_D \gamma\cup\delta, &\text{if } i+j=0, i,j\neq 0.
\end{cases}
\end{split}
\end{equation}
The pairing on the rest of the classes is generated by linearity. 

If we pick fixed basis for $\HH_0$ and each $\HH_{i\neq 0}$, we have a basis for the whole $\HH$. $\HH$ is used as our ring of insertions, and the indices in $\HH_i$ signifies the contact order of the corresponding marking. For details, see \cite[Section 7.1]{FWY}.

In \cite{FWY}, relative invariants with insertions coming from $\HH$ are denoted by the following:
\[
I_\dd(\bar\psi^{a_1}[\gamma_1]_{i_1}, \ldots, \bar\psi^{a_n}[\gamma_n]_{i_n}),
\]
where $(i_1,\ldots,i_n)$ are contact orders and $\bar\psi^{a_1}\gamma_1,\ldots,\bar\psi^{a_n}\gamma_n$ are insertions (see \cite[Definition 7.3]{FWY}). Unfortunately, this notation is in conflict with the notation for $I$-functions in this paper. In this paper, we switch the above notation to the following:
\[
\left\langle \bar\psi^{a_1}[\gamma_1]_{i_1}, \ldots, \bar\psi^{a_n}[\gamma_n]_{i_n} \right\rangle_{0,n, \dd}^{(X,D)}.
\]

According to \cite[Section 7.5]{FWY}, the whole Lagrangian cone formalism can be built over $\HH$, and therefore, $I$-functions make sense as points on the Lagrangian cone. For $i\in \mathbb Z$, let $\{\widetilde T_{i,\alpha}\}$ be a basis for $\HH$ and $\widetilde T_{-i}^\alpha$ be the dual basis. For $l\geq 0$, we write $t_l=\sum\limits_{i,\alpha} t_{l;i,\alpha}\widetilde T_{i,\alpha}$, where $t_{l;i,\alpha}$ are formal variables. Also write
\[
\bt(z)=\sum\limits_{l=0}^\infty t_l z^l.
\]
\emph{The relative genus-zero descendant Gromov--Witten potential} is defined as
\[
\mathcal F(\bt(z))=\sum\limits_{\dd} \sum\limits_{n=0}^\infty \dfrac{Q^{\dd}}{n!} \langle\underbrace{\bt(\bar\psi),\ldots,\bt(\bar\psi)}_{n}\rangle_{0,n, \dd}^{(X,D)}.
\]
Givental's Lagrangian cone $\mathcal L$ is then defined as the graph of the differential $d\mathcal F$. More precisely, a (formal) point in Lagrangian cone can be explicitly written as
\[
-z+\bt(z)+\sum\limits_{\dd} \sum\limits_{n} \sum\limits_{i,\alpha} \dfrac{Q^\dd}{n!} \Big\langle\dfrac{\widetilde T_{i,\alpha}}{-z-\bar\psi},\underbrace{\bt(\bar\psi),\ldots,\bt(\bar\psi)}_{n}\Big\rangle_{0,n+1, \dd}^{(X,D)} \widetilde T_{-i}^\alpha.
\]
Moreover, \cite[Theorem 6.1]{FWY} implies
\begin{theorem}\label{thm:orbrel}
Let $\dd\in \NE(X)$ and $(i_1,\ldots,i_n)$ be a sequence of integers such that $\sum_k i_k=D\cdot \dd$. For $r\gg 1$, we have
\[
r^{\rho_-}\langle \tau_{a_1}(\gamma_1\mathbf 1_{i_1/r}),\ldots, \tau_{a_n}(\gamma_n\mathbf 1_{i_n/r}) \rangle_{0,n, \dd}^{X_{D,r}} = \langle \bar\psi^{a_1}[\gamma_1]_{i_1},\ldots \bar\psi^{a_n}[\gamma_n]_{i_n}\rangle_{0,n, \dd}^{(X,D)},
\]
where $\rho_-$ is the number of $i_k$, for $1\leq k \leq n$, such that $i_k<0$.
\end{theorem}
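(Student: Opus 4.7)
The proof splits according to the signs of the contact orders $i_k$. When all $i_k \geq 0$, one has $\rho_- = 0$ and every orbifold marking on $X_{D,r}$ carries age $i_k/r \in [0,1)$, which for $r \gg 1$ falls in the small-age regime. In this case the identity is precisely the genus-zero orbifold-relative correspondence of \cite{ACW}, with no scaling factor because each small-age orbifold marking corresponds one-to-one to a relative marking without multiplicity.

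For the general case the strategy is to take seriously the convention of \cite[Section 7]{FWY}, in which relative invariants with negative contact orders are \emph{defined} in terms of orbifold invariants of root stacks together with the specific normalization $r^{\rho_-}$. Under this viewpoint, Theorem \ref{thm:orbrel} asserts two facts: (i) the normalized left-hand side is independent of $r$ for $r \gg 1$, and (ii) the invariant so obtained agrees with the one produced by the virtual pullback / degeneration formalism of \cite{FWY}.

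For (i), I would invoke Theorem \ref{thm:tw} to rewrite the orbifold invariant as a doubly twisted invariant on the root gerbe $\fX_r$ of $\cO(D)$ over $X$. A marking with $i_k < 0$ sits on the inertia component $\iota_{r+i_k}(X) \subset \underline{I}\fX_r$ whose $\mu_r$-banding contributes the usual factor $1/r$; the prefactor $r^{\rho_-}$ absorbs these factors exactly. The remaining integral involves the universal classes $\cO(D)_{0,n,\dd}$ and $\cO(D/r)_{0,n,\dd}$, whose $r$-dependence can be controlled by the Grothendieck--Riemann--Roch computation flagged in the remark after Theorem \ref{thm:tw}, producing a stable limit expressed via double ramification cycles pulled back from $X$.

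The hard part will be (ii). Matching the stabilized expression with the formal relative invariant requires a degeneration of $Y_{X_\infty,r}$ into a root-stacked rubber, followed by a node-gluing argument in which the sign-dependent pairing \eqref{eqn:pairing} precisely reflects the positive/negative distinction at glued contact points. Aligning the combinatorics of \cite[Section 7]{FWY} with the localization graph structure on the root-stack side---via an edge-vanishing phenomenon parallel to the one exploited in the proof of Theorem \ref{thm:tw}---is where the bulk of the technical work lies, and where one ultimately sees that the answer no longer depends on $r$.
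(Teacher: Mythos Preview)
The paper does not supply its own proof of this theorem: it is simply quoted as a consequence of \cite[Theorem~6.1]{FWY}. So the ``paper's proof'' is the proof in \cite{FWY}, which proceeds by a direct degeneration and virtual localization analysis on the moduli spaces of stable maps to $X_{D,r}$, establishing polynomiality in $r$ of the relevant cycle classes and then reading off the constant term. It does not go through the hypersurface construction or Theorem~\ref{thm:tw} of the present paper at all.

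Your outline is therefore a genuinely different route, but it has a real gap. Invoking Theorem~\ref{thm:tw} to rewrite the left-hand side as a doubly twisted invariant on $\fX_r$ only works for insertions of the form $i^*\gamma_j$ with $\gamma_j\in H^*_{\on{CR}}(Y_{X_\infty,r})$; see the footnote attached to Theorem~\ref{main-theorem}. In Theorem~\ref{thm:orbrel} the classes $\gamma_k$ attached to markings with $i_k\neq 0$ live in $H^*(D)$, and in general these are \emph{not} restrictions of classes from $X$ (or from $Y_{X_\infty,r}$). So the very first step of your argument for part~(i) does not apply to the full statement. Even restricting to ambient classes, the ``stable limit via GRR and double ramification cycles'' you point to is, in the paper, only a remark about work in progress, not an available tool; and for part~(ii) you would essentially be reproving the content of \cite{FWY} from scratch. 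The honest proof here is to cite \cite[Theorem~6.1]{FWY} and, if desired, sketch how that paper's localization argument controls the $r$-dependence directly on $\overline{M}_{0,n}(X_{D,r},\dd)$ without passing through the ambient $Y_{X_\infty,r}$.
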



\subsection{The non-extended $I$-function for relative invariants}\label{sec:small-rel-I}

Assuming the anti-canonical divisor of $X_{D,r}$ is nef for $r\gg 1$, we consider the following $J$-function of $X_{D,r}$ with restricted parameters.
\[
J_{X_{D,r}}(t^\prime,z):=z+t^\prime+\sum_{\dd}\sum_{\alpha}Q^{\dd}\left\langle \frac{\phi_\alpha}{z-\bar{\psi}},t^\prime,\ldots,t^\prime\right\rangle^{X_{D,r}}_{0,n+1, \dd}\phi^\alpha,
\]
where we restrict the parameter $t^\prime$ to $H^*(X)\subset H^*_{\on{CR}}(X_{D,r})$; $\{\phi_\alpha\}$ is a basis of the ambient cohomology ring\footnote{the basis $\{\phi_\alpha\}$ is a basis of the cohomology ring pullback from the cohomological ring of $X_{D,r}$ to $D_r$.} of the twisted sector $\mathcal D$ of $I\mathcal X_{D,r}$ with age $(D\cdot \dd)/r$. Indeed, for this restricted $J$-function, the distinguished marked point (first marked point) has to be orbifold marked point with age $(D\cdot \dd)/r$ by virtual dimension constraint. 

On the other hand, under the set-up of \cite{FWY}, we can write down the restricted $J$-function of $(X,D)$ as follows.
\begin{defn}
The $J$-function for relative Gromov--Witten invariants of $(X,D)$ with restricted parameter is 
\[
J_{(X,D)}([t^\prime]_0,z):=z+[t^\prime]_0+\sum_{\dd}\sum_{\alpha}Q^{\dd}\left\langle \frac{[\phi_\alpha]_{D\cdot \dd}}{z-\bar\psi},[t^\prime]_0,\ldots,[t^\prime]_0\right\rangle^{(X,D)}_{0,n+1, \dd}[\phi^\alpha]_{-D\cdot \dd}.
\]
\end{defn}

Note that each invariant in $J_{(X,D)}([t^\prime]_0,z)$ only has one relative marking, hence the contact order is $D\cdot d$. Informally, $J_{(X,D)}([t^\prime]_0,z)$ can be seen as a ``limit" of $J^{X_{D,r}}(t^\prime,z)$ for $r\to \infty$. It is slightly different from the traditional limit, because our ``limit" also changes the underlying vector spaces from $H^*(I\mathcal X_{D,r})$ to $\HH$.

Consider $I_{X_{D,r}}(Q,t,z)$ from Theorem \ref{main-theorem}. We want to turn it into an $I$-function for relative Gromov--Witten theory. For a fixed $Q^{\dd}$, when $r>D\cdot \dd$, the coefficient looks like the following.
\begin{align}
&J_{X, \dd}(t,z)Q^{\dd}\left(\frac{\prod_{a\leq D\cdot \dd}(D+az)}{\prod_{a \leq 0}(D+az)}\right)\frac{1}{(D+(D\cdot \dd)z)/r}\mathbf 1_{\langle -(D\cdot \dd)/r\rangle}\\
\notag=& J_{X, \dd}(t,z)Q^{\dd}\left(\prod_{0<a\leq D\cdot \dd-1}(D+az)\right)r\mathbf 1_{\langle -(D\cdot \dd)/r\rangle},
\end{align}
Except $r\mathbf 1_{\langle -(D\cdot \dd)/r\rangle}$, the rest is independent of $r$ as long as $r>D\cdot \dd$. 
On nontrivial twisted sectors of $IX_{D,r}$, Poincar\'e pairing has an extra $r$ factor due to gerbe structures. Therefore, $r\mathbf 1_{\langle -(D\cdot \dd)/r\rangle}$ altogether turns into $[1]_{-D\cdot \dd}$.
Hence, we conclude
\begin{theorem} \label{thm-rel}
Given a smooth projective variety $X$ and a smooth nef divisor $D$ such that the class $-K_X-D$ is nef. The non-extended $I$-function for relative Gromov--Witten invariants of $(X,D)$ is
\begin{align}\label{$I$-function-rel}
I_{(X,D)}(Q,t,z)
= \sum_{\dd\in \on{NE}(X)}J_{X, \dd}(t,z)Q^{\dd}\left(\prod_{0<a\leq D\cdot \dd-1}(D+az)\right)[\mathbf 1]_{-D\cdot \dd},
\end{align}
where $[\mathbf 1]_{-D\cdot \dd}$ is the identity class of the component $\HH_{-D\cdot \dd}$ of the ring of insertions $\HH$.
The non-extended $I$-function $I_{(X,D)}(Q,t,z)$ equals the $J$-function $J_{(X,D)}([t^\prime]_0,z)$ for the relative Gromov--Witten invariants of $(X,D)$ with restricted parameter after change of variables.
\end{theorem}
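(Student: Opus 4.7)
The plan is to derive Theorem~\ref{thm-rel} by taking a large-$r$ limit of the orbifold mirror theorem (Theorem~\ref{main-theorem}) and converting the resulting orbifold statement into a relative one via the orbifold--relative correspondence of Theorem~\ref{thm:orbrel}. First I would restrict the parameter $t$ in $I_{X_{D,r}}(Q,t,z)$ to lie in $H^*(X)\subset H^*_{\on{CR}}(X_{D,r})$, so that every coefficient of $Q^{\dd}$ carries only insertions pulled back from the ambient cohomology; Birkhoff factorization plus the mirror map applied to this restricted $I$-function then extracts the restricted orbifold $J$-function $J_{X_{D,r}}(t',z)$.

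Next I would fix a curve class $\dd\in\on{NE}(X)$ and take $r>D\cdot\dd$. In this regime $\langle D_r\cdot\dd\rangle=(D\cdot\dd)/r$, so the set $\{a:\langle a\rangle=\langle D_r\cdot\dd\rangle,\ 0<a\leq D_r\cdot\dd\}$ contains the single value $a=(D\cdot\dd)/r$, and the corresponding denominator factor in the formula of Theorem~\ref{main-theorem} equals $(D+(D\cdot\dd)z)/r$. This cancels the $a=D\cdot\dd$ term in the numerator $\prod_{0<a\leq D\cdot\dd}(D+az)$ up to a prefactor of $r$, leaving exactly the hypergeometric factor appearing in $I_{(X,D)}$ multiplied by $r\,\mathbf 1_{\langle -(D\cdot\dd)/r\rangle}$. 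Crucially, once $r>D\cdot\dd$ the remaining factor no longer depends on $r$.

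Then I would invoke Theorem~\ref{thm:orbrel}: a virtual dimension count forces the distinguished marked point of each nonzero summand in $J_{X_{D,r}}(t',z)$ to have age $(D\cdot\dd)/r$, and for $r\gg 1$ each such orbifold invariant equals the corresponding relative invariant with a single relative marking of contact order $D\cdot\dd$. Rewriting $\mathbf 1_{\langle -(D\cdot\dd)/r\rangle}$ together with its Chen--Ruan dual in terms of the relative basis and pairing of $\HH_{-D\cdot\dd}$ absorbs the prefactor $r$ produced in the previous step (the extra $r$ arising from the $\mu_r$-gerbe structure of $D_r$ over $D$), converting $r\,\mathbf 1_{\langle -(D\cdot\dd)/r\rangle}$ into $[\mathbf 1]_{-D\cdot\dd}$. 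Summing over $\dd$ yields the stated formula for $I_{(X,D)}$, and termwise passage to the limit of the orbifold identity $I_{X_{D,r}}(Q,t,z)\leftrightarrow J_{X_{D,r}}(t',z)$ gives the claimed equality $I_{(X,D)}(Q,t,z)=J_{(X,D)}([t']_0,z)$ after change of variables.

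The main obstacle is making the limiting procedure rigorous on the relative Lagrangian cone: Theorem~\ref{main-theorem} is a statement on $\mathcal L_{X_{D,r}}$, whereas the conclusion lives on the relative cone of \cite{FWY}. The nefness of $-K_X-D$ ensures that $I_{(X,D)}$ has the small-slice asymptotic form $z+[t']_0+O(z^{-1})$, so identifying it with a relative $J$-function reduces to matching coefficients of $Q^{\dd}$ after Birkhoff factorization and mirror map. The stability of each such coefficient for $r>D\cdot\dd$, combined with Theorem~\ref{thm:orbrel} applied curve class by curve class, makes this termwise passage well-defined and completes the argument.
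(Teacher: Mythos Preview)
Your proposal is correct and follows essentially the same approach as the paper: fix $\dd$, take $r>D\cdot\dd$ so that the denominator in Theorem~\ref{main-theorem} collapses to the single factor $(D+(D\cdot\dd)z)/r$, cancel against the top term of the numerator to leave $r\,\mathbf 1_{\langle -(D\cdot\dd)/r\rangle}$ times an $r$-independent hypergeometric factor, absorb the $r$ via the gerbe Poincar\'e pairing to obtain $[\mathbf 1]_{-D\cdot\dd}$, and invoke Theorem~\ref{thm:orbrel} to identify the restricted orbifold $J$-function with $J_{(X,D)}([t']_0,z)$. Your discussion of the role of the nefness of $-K_X-D$ (forcing the asymptotic form $z+[t']_0+O(z^{-1})$) and of the termwise stabilization in $r$ is somewhat more explicit than the paper's treatment, which handles the limit informally, but the argument is the same.
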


The non-extended $I$-function for relative invariants coincides with the $I$-function for local invariants of $\cO_X(-D)$ (up to signs). It should be related to the equality in \cite{vGGR} between relative Gromov--Witten invariants of $(X,D)$ with one relative marking and the local Gromov--Witten invariants of $\cO_X(-D)$.



\begin{example}
Consider Gromov--Witten invariants of $\bbP^2$ relative to a generic cubic curve $C$ with maximal tangency along $C$ at a point. The $I$-function is the limit of the $I$- function in Example \ref{example-root-plane-cubic}:
\begin{align}
I_{(\bbP^2,C)}(Q,z)=ze^{H\log Q /z}\left(1+\sum_{d>0} Q^{d} \frac{\prod_{a=1}^{3d-1}(3H+az)}{\prod_{a=1}^d (H+az)^3}[\mathbf 1]_{-3d}\right).
\end{align}
The $I$-function $I_{(\bbP^2,C)}(Q,z)$ is equal to the $J$-function $J_{(\bbP^2,C)}([t^\prime]_0,z)$ via change of variables. Hence relative invariants of $(\bbP^2,C)$ with one marking can be computed. It coincides with the $I$-function for the canonical bundle $K_{\bbP^2}$ of $\bbP^2$ via the relation between relative invariants and local invariants \cite{vGGR}. Moreover, we recover the relative Gromov--Witten invariants of $(\bbP^2,C)$ with one relative marking computed in \cite{vG}.
\end{example}

\begin{example}
Similarly, we can consider Gromov--Witten invariants of $\bbP^3$ relative to a generic cubic surface $S$ with maximal tangency along $S$ at a point. The $I$-function can be obtained by taking the limit of the $I$-function in Example \ref{example-root-P3-cubic}: 
\begin{align}
I_{(\bbP^3,S)}(Q,z)=ze^{H\log Q /z}\left(1+\sum_{d>0} Q^{d} \frac{\prod_{a=1}^{3d-1}(3H+az)}{\prod_{a=1}^d (H+az)^4}[\mathbf 1]_{-3d}\right).
\end{align}
Relative invariants with one marking can be directly computed.
\end{example}

\subsection{The $S$-extended $I$-function for relative invariants}\label{sec:extended-rel-I}

Recall the $S$-extended $I$-function for root stacks is
\begin{align}
I_{X_{D,r}}^{S}(Q,x,t,z)
=\sum_{\dd\in \on{NE}(X)}\sum_{(k_1,\ldots,k_m)\in (\mathbb Z_{\geq 0})^m}J_{X, \dd}(t,z)Q^{\dd}\frac{\prod_{i=1}^m x_i^{k_i}}{z^{\sum_{i=1}^m k_i}\prod_{i=1}^m(k_i!)}\times \\
\notag \quad \left(\prod_{0<a\leq D\cdot \dd}(D+az)\right)\left(\frac{\prod_{\langle a\rangle=\langle D_r\cdot \dd-\frac{\sum_{i=1}^mk_ia_i}{r}\rangle,a\leq 0}(D_r+az)}{\prod_{\langle a\rangle=\langle D_r\cdot \dd-\frac{\sum_{i=1}^mk_ia_i}{r}\rangle,a\leq D_r\cdot \dd-\frac{\sum_{i=1}^mk_ia_i}{r}}(D_r+az)}\right)\mathbf 1_{\langle -D_r\cdot \dd+\frac{\sum_{i=1}^mk_ia_i}{r}\rangle}.
\end{align}
The $S$-extended $I$-function for root stacks determines the $J$-function along the twisted sectors of age $\frac{a_i}{r}$, for $1\leq i\leq m$. Therefore, for $r$ sufficiently large, the $S$-extended $I$-function determines relative invariants with relative markings of contact order $a_i$, for $1\leq i\leq m$.

The $S$-extended $I$-function for relative invariants can be obtained from $S$-extended $I$-function for root stacks by fixing $\{a_i\}_{i=1}^m$ and $d$ and letting $r$ be sufficiently large. The $S$-extended $I$-function splits into two parts:
\begin{itemize}
    \item When 
$\frac{\sum_{i=1}^mk_ia_i}{r}<D_r\cdot \dd$, we have
\[
\left(\frac{\prod_{\langle a\rangle=\langle D_r\cdot \dd-\frac{\sum_{i=1}^mk_ia_i}{r}\rangle,a\leq 0}(D_r+az)}{\prod_{\langle a\rangle=\langle D_r\cdot \dd-\frac{\sum_{i=1}^mk_ia_i}{r}\rangle,a\leq D_r\cdot \dd-\frac{\sum_{i=1}^mk_ia_i}{r}}(D_r+az)}\right)=\frac{1}{D_r+(D_r\cdot \dd-\frac{\sum_{i=1}^mk_ia_i}{r})z},
\]
where the extra factor $r$ on the right hand side together with $\mathbf 1_{\langle -D_r\cdot \dd+\frac{\sum_{i=1}^mk_ia_i}{r}\rangle}$ is identified with the class $[{\bf 1}]_{-D\cdot \dd+\sum_{i=1}^mk_ia_i}$ in relative theory.
\item When $\frac{\sum_{i=1}^mk_ia_i}{r}\geq D_r\cdot \dd$, we have 
\[
\left(\frac{\prod_{\langle a\rangle=\langle D_r\cdot \dd-\frac{\sum_{i=1}^mk_ia_i}{r}\rangle,a\leq 0}(D_r+az)}{\prod_{\langle a\rangle=\langle D_r\cdot \dd-\frac{\sum_{i=1}^mk_ia_i}{r}\rangle,a\leq D_r\cdot \dd-\frac{\sum_{i=1}^mk_ia_i}{r}}(D_r+az)}\right)=1.
\]
The class $\mathbf 1_{\langle -D_r\cdot \dd+\frac{\sum_{i=1}^mk_ia_i}{r}\rangle}$ is identified with the class $[{\mathbf 1}]_{-D\cdot \dd+\sum_{i=1}^mk_ia_i}$ in relative theory.
\end{itemize}
Therefore, we write the $S$-extended $I$-function for relative invariants as follows
\[
I_{(X,D)}^{S}(Q,x,t,z)=I_++I_-,
\]
where
\begin{align*}
I_+:=&\sum_{\substack{\dd\in \on{NE}(X),(k_1,\ldots,k_m)\in (\mathbb Z_{\geq 0})^m\\ \sum_{i=1}^mk_ia_i<D\cdot \dd} }J_{X, \dd}(t,z)Q^{\dd}\frac{\prod_{i=1}^m x_i^{k_i}}{z^{\sum_{i=1}^m k_i}\prod_{i=1}^m(k_i!)}\\
&\frac{\prod_{0<a\leq D\cdot \dd}(D+az)}{D+(D\cdot \dd-\sum_{i=1}^mk_ia_i)z}[{\mathbf 1}]_{-D\cdot \dd+\sum_{i=1}^mk_ia_i},
\end{align*}
and 
\begin{align*}
I_-:=&\sum_{\substack{\dd\in \on{NE}(X),(k_1,\ldots,k_m)\in (\mathbb Z_{\geq 0})^m\\ \sum_{i=1}^mk_ia_i\geq D\cdot \dd} }J_{X, \dd}(t,z)Q^{\dd}\frac{\prod_{i=1}^m x_i^{k_i}}{z^{\sum_{i=1}^m k_i}\prod_{i=1}^m(k_i!)}\\
&\left(\prod_{0<a\leq D\cdot \dd}(D+az)\right)[{\mathbf 1}]_{-D\cdot \dd+\sum_{i=1}^mk_ia_i}.
\end{align*}

The $S$-extended $I$-function for relative invariants correspond to $J$-function for relative invariants with possibly one negative relative marking. These relative invariants with one negative relative marking exactly correspond to the terms of $I_-$.
On the other hand, the terms of $I_+$
correspond to relative invariants without negative relative marking. Therefore, we have a mirror formula for relative invariants beyond the case of maximal tangency.

\begin{theorem}\label{thm-rel-I-extended}
The $S$-extended $I$-function $I_{(X,D)}^{S}(Q,x,t,z)$ for relative invariants lies in Givental's Lagrangian cone for relative invariants as defined in \cite[Section 7.5]{FWY}.
\end{theorem}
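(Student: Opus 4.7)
The plan is to derive this theorem as the $r \to \infty$ limit of Theorem \ref{thm-orb-extended}, using the orbifold/relative correspondence of Theorem \ref{thm:orbrel} (i.e.\ \cite[Theorem 6.1]{FWY}) to translate statements from the orbifold Lagrangian cone $\mathcal{L}_{X_{D,r}}$ to the relative Lagrangian cone over $\HH$ defined in \cite[Section 7.5]{FWY}. The whole argument is coefficient-wise in the Novikov variable $Q$ and in the extended parameters $x_1,\ldots,x_m$; for each fixed coefficient only finitely many correlators contribute and everything stabilizes once $r$ is large enough, so the limit is literal rather than formal.

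First, I would fix $\dd\in\NE(X)$ and $(k_1,\ldots,k_m)\in(\Z_{\geq 0})^m$, and examine for $r>D\cdot\dd+\sum_i k_i a_i$ the coefficient of $Q^{\dd}\prod_i x_i^{k_i}/\big(z^{\sum k_i}\prod k_i!\big)$ in $I_{X_{D,r}}^{S}$. The case analysis performed in the body of Section \ref{sec:extended-rel-I} shows this coefficient equals, up to a single factor of $r$ carried by $\mathbf{1}_{\langle -D_r\cdot\dd+\sum k_i a_i/r\rangle}$, the corresponding coefficient of $I_+$ or $I_-$ according to the sign of $D\cdot\dd-\sum_i k_i a_i$. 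Hence, under the identification $r\,\mathbf{1}_{\langle -D_r\cdot\dd+\sum k_i a_i/r\rangle}\leftrightarrow[\mathbf{1}]_{-D\cdot\dd+\sum k_i a_i}$ of twisted sectors of $X_{D,r}$ with components $\HH_{-D\cdot\dd+\sum k_i a_i}$, the orbifold $S$-extended $I$-function converges coefficient-wise to $I_{(X,D)}^{S}$.

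Second, I would unpack the Lagrangian-cone assertion supplied by Theorem \ref{thm-orb-extended}: there exists $\mathbf{t}(z)\in H^*_{\on{CR}}(X_{D,r})[\![z]\!]$ such that
\[
I_{X_{D,r}}^{S}=-z+\mathbf{t}(-z)+\sum_{\dd,n,\alpha}\frac{Q^{\dd}}{n!}\Big\langle\frac{\phi_\alpha}{-z-\bar\psi},\mathbf{t}(\bar\psi),\ldots,\mathbf{t}(\bar\psi)\Big\rangle^{X_{D,r}}_{0,n+1,\dd}\phi^\alpha,
\]
where $\{\phi_\alpha\},\{\phi^\alpha\}$ are Chen--Ruan dual bases. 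For each fixed coefficient in $Q$ and $x$, the orbifold markings that appear all have ages of the form $a_i/r$, so Theorem \ref{thm:orbrel} rewrites each such correlator, after multiplication by $r^{\rho_-}$, as the corresponding relative correlator with contact orders $\pm a_i$ and distinguished marking carrying $[\phi_\alpha]_{-D\cdot\dd+\sum k_i a_i}$. The identity then converges, coefficient-wise, to the canonical expansion defining a point on the relative Lagrangian cone over $\HH$, provided the $r$-factor bookkeeping is consistent: each pairing on a nontrivial twisted sector of $IX_{D,r}$ carries $1/r$ from the gerbe structure, the correspondence introduces $r^{\rho_-}$, and the identification $r\mathbf{1}_{\cdot/r}\leftrightarrow[\mathbf{1}]_{\cdot}$ absorbs the remaining $r$ so that the sum over $\{\phi_\alpha\},\{\phi^\alpha\}$ passes to a sum over a basis/dual-basis pair for $\HH$ under the pairing \eqref{eqn:pairing}.

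The main obstacle will be exactly this last bookkeeping, particularly for coefficients contributing to $I_-$, where several relative markings have negative contact orders and the factor $r^{\rho_-}$ must cancel exactly with the gerbe-pairing normalizations and the twisted-sector identifications uniformly in the number of insertions and for every $n\geq 0$ appearing in the Lagrangian-cone expansion. Verifying this cancellation in full generality — and checking that it commutes with the $r\to\infty$ limit of the mirror map $\mathbf{t}(z)$ — is what makes the proof nontrivial; once this is settled, the limit of the orbifold Lagrangian-cone identity is precisely the relative Lagrangian-cone identity for $I_{(X,D)}^{S}$, which is what the theorem asserts.
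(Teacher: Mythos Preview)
Your proposal is correct and follows essentially the same approach as the paper: the paper's argument for this theorem is the discussion immediately preceding it in Section~\ref{sec:extended-rel-I}, which fixes $\{a_i\}$ and $\dd$, takes $r$ sufficiently large in the $S$-extended orbifold $I$-function of Theorem~\ref{thm-orb-extended}, performs the two-case split yielding $I_+$ and $I_-$, and then invokes the orbifold/relative correspondence of Theorem~\ref{thm:orbrel} to transport the Lagrangian-cone statement. Your write-up is in fact more explicit than the paper's about the $r$-factor bookkeeping (gerbe pairing, $r^{\rho_-}$, and the identification $r\,\mathbf{1}_{\cdot/r}\leftrightarrow[\mathbf{1}]_{\cdot}$), which the paper leaves largely implicit.
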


We can also allow some $a_i$ to be large, then the $S$-extended $I$-function allows us to compute relative invariants with more than one negative relative markings.

\end{document}